\theoremstyle{plain}
\newtheorem{theorem}{Theorem}[section]
\newtheorem{prop}[theorem]{Proposition}\newtheorem{Proposition}[theorem]{Proposition}
\newtheorem{Lemma}[theorem]{Lemma}
\theoremstyle{definition}
\newtheorem{defn}[theorem]{Definition}
\newtheorem{Definition}[theorem]{Definition}
\newtheorem{remark}[theorem]{Remark}
\newtheorem{example}[theorem]{Example}\newtheorem{Example}[theorem]{Example}
   \newcommand{\ov}{\overline}
\newcommand{\thmref}[1]{Theorem~\ref{#1}}
\newcommand{\secref}[1]{Section~\ref{#1}}
\newcommand{\lemref}[1]{Lemma~\ref{#1}}
\newcommand{\remarkref}[1]{Remark~\ref{#1}}
\newcommand{\defnref}[1]{Definition~\ref{#1}}
\newcommand{\propref}[1]{Proposition~\ref{#1}}
                                                   \DeclareMathOperator{\Gal}{Gal}
\DeclareMathOperator{\Aut}{Aut} 
\DeclareMathOperator{\Bim}{Bim} 
\DeclareMathOperator{\Hom}{Hom}
\DeclareMathOperator{\End}{End}
\newcommand{\BZ}{\mathbb{Z}}
\newcommand{\BC}{\mathbb{C}}
\newcommand{\BP}{\mathbb{P}}
\newcommand{\BF}{\mathbb{F}}\newcommand{\BQ}{\mathbb{Q}}
\newcommand{\BR}{\mathbb{R}}
\newcommand{\al}{\alpha}
\DeclareMathSymbol{\twoheadrightarrow}  {\mathrel}{AMSa}{"10}
\def\RR{{\mathcal R}}
\begin{document}

\title[Complex Tori] {Simple Complex Tori  of Algebraic Dimension 0}

\author{Tatiana Bandman }\address{Department of Mathematics,
Bar-Ilan University,
Ramat Gan, 5290002, Israel}
\email{bandman@math.biu.ac.il}
\author{Yuri  G. Zarhin}
\address{Pennsylvania State University, Department of Mathematics, University Park, PA 16802, USA}
\email{zarhin@math.psu.edu}
\thanks{The  second named author (Y.Z.) was partially supported by Simons Foundation Collaboration grant   \# 585711. This work was finished in
January - May  2022 during his stay at the Max Planck Institut f\"ur Mathematik (Bonn, Germany), whose hospitality and support are gratefully acknowledged.}
\dedicatory{To the memory of Alexey Nikolaevich Parshin}

\begin{abstract}

Using Galois theory, we construct  explicitly (in all complex dimensions $\ge 2$) an infinite family of simple $g$-dimensional complex tori $T$ that enjoy the following properties.

\begin{itemize}
\item
The Picard number of $T$ is $0;$   in particular, the algebraic dimension of $T$ is $0$.
\item
If $T^{\vee}$ is the dual of $T$ then  $\Hom(T,T^{\vee})=\{0\}$.
\item
The automorphism group $\Aut(T)$ of $T$ is isomorphic to $\{\pm 1\} \times \BZ^{g-1}$.
\item
The endomorphism algebra $\End^{0}(T)$ of $T$ is a purely imaginary number field of degree $2g$.
\end{itemize}
 \end{abstract}

\subjclass[2010]{ 32M05, 32J27,   12F10, 14K20}
\keywords{complex tori, algebraic dimension 0}
\maketitle

\section {Introduction}\label {intro}

 It is known that a ``very general'' complex torus $T$  of dimension $\dim(T)=g\ge 2$ has the  algebraic dimension  $a(T)=0.$ But the  explicit examples of such tori  with $g>2$ are very scarce.   For $g=2$ one may find  the explicit examples of complex tori with algebraic dimension zero 
 in \cite[Appendix]{EF} and \cite[ Example 7.4] {BL}. (All the tori of complex dimension $1$ have algebraic dimension $1$.) 

  The aim of this paper is  to  provide explicit examples of {\sl simple} complex tori $T$ with $a(T)=0$ in all complex dimensions $g\ge 2.$

   The tori   we construct   have some interesting additional properties and may be viewed as non-algebraic analogues of abelian varieties of CM type,
   see \cite[pp. 12--13 and Th. 4.1 on p. 15]{LangCM}. Similar tori played an important role in  C. Voisin's construction of counterexamples to Kodaira's 
   {\sl algebraic approximation problem}   \cite{Vo04,Vo06}, see also \cite{GS}. (We discuss her results about tori in Remark \ref{Voi} below.)
  We start with the following definitions.
  
  \begin{Definition} \label{simple}  A
  positive-dimensional complex torus $X$ is called {\it simple}  if $\{0\}$ and $X$ are the only complex subtori of $X$  (see, e.g., \cite[ Chapter I, Section 7] {BL}). 
\end{Definition}
  
  \begin{defn}
  \label{specialT}
  A complex torus $T$ of dimension $g \ge 2$ is called {\sl special} if it enjoys the following properties.
  \begin{itemize}
\item[(a)]
$T$ is simple and has algebraic dimension 0. In addition,  its endomorphism algebra $\mathrm{End}^{0}(T)=\mathrm{End}(T)\otimes \BQ$ is  
a purely imaginary number field of degree $2g$.
\item[(b)]
The Picard number $\rho(T)$ of $T$ is $0$.
\item[(c)]
If $T^{\vee}$ is the dual of $T$ then $\Hom(T,T^{\vee})=\{0\}.$
In particular, complex tori $T$ and $T^{\vee}$ are not  isogenous.
\item[(d)]
Let $\Aut(T)$ be the automorphism group of the complex Lie group $T$. Then 
$\Aut(T)$ is isomorphic to $\{1,-1\} \times \BZ^{g-1}$. In particular, $\Aut(T)$ 
 is an infinite commutative group, whose torsion subgroup is a cyclic group of order $2$.
\end{itemize}
    \end{defn}
   
Our main result is the following

\begin{theorem}
\label{mainP}
Let $g \ge 2$ be an integer and $E$ a degree $2g$ number field that enjoys the following properties.

\begin{itemize}
\item[(i)] $E$ is purely imaginary;
\item[(ii)]  $E$ has no proper subfields except $\BQ.$
\end{itemize}

  Choose any isomorphism of $\BR$-algebras
  \begin{equation}
  \label{complexS1}
  \Psi: E_{\BR}:=E\otimes_{\BQ}\BR\to \oplus_{j=1}^g\BC=\BC^g
  \end{equation}
and  a $\BZ$-lattice $\Lambda$ of rank $2g$ in $E\subset E_{\BR}$.  
Isomorphism $\Psi$ provides $E_{\BR}$ with the structure of a $g$-dimensional complex vector space.

Then the complex torus $T=T_{E,\Psi,\Lambda}:=E_{\BR}/\Lambda$ is special and its endomorphism algebra $\mathrm{End}^{0}(T)$ is isomorphic to $E$.

\end{theorem}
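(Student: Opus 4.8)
The plan is to read off the complex structure on $E_\BR$ as multiplication by a single element of $E_\BR$ and then extract every stated property from the interplay between that element, the $E$-action, and the Galois theory of $E$. First I would note that, since $\Psi$ is an isomorphism of $\BR$-algebras, the complex structure $J$ on $E_\BR$ is multiplication by $\iota:=\Psi^{-1}(i,\dots,i)\in E_\BR$, an element with $\iota^2=-1$ and $\iota\notin\BR\cdot 1$. Because $E_\BR$ is commutative, every multiplication $m_e$ ($e\in E$) commutes with $J=m_\iota$ and is therefore $\BC$-linear, giving an embedding $E\hookrightarrow\End^0(T)$; moreover $H_1(T,\BQ)=\Lambda\otimes\BQ=E$. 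I would record the two Galois translations of the hypotheses: writing $L$ for the Galois closure of $E$, $G=\Gal(L/\BQ)$ and $H=\Gal(L/E)$, condition (ii) says exactly that $H$ is a maximal subgroup, i.e. $G$ acts \emph{primitively} on the set $\Sigma$ of the $2g$ embeddings $E\hookrightarrow L$, while condition (i) says that complex conjugation $c_0\in G$ acts on $\Sigma$ without fixed points, pairing $\Sigma$ into $g$ conjugate pairs; the datum $\Psi$ amounts to a choice $\Phi\subset\Sigma$ of one embedding from each pair. Two consequences of (ii) I would flag: $E$ is not a CM field (it has no totally real subfield of index $2$), and its only roots of unity are $\pm1$ (no $\BQ(\zeta_n)$ with $n\ge 3$ can be a subfield, since those of degree $\ge 4$ have proper subfields).

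Next I would prove \emph{simplicity} by a short stabilizer argument. A nonzero proper subtorus is the same as a $\BQ$-subspace $W$ with $0\ne W\ne E$ whose real span $W_\BR$ is $J$-stable. Consider $B:=\{e\in E:eW\subseteq W\}$. As a finite-dimensional $\BQ$-subalgebra of the field $E$ it is itself a subfield; and since $B\otimes_\BQ\BR=\{a\in E_\BR:aW_\BR\subseteq W_\BR\}$ contains both $1$ and $\iota$ (the latter because $W_\BR$ is $m_\iota$-stable), we get $\dim_\BQ B\ge 2$, so $B\ne\BQ$. By (ii) this forces $B=E$, whence $W$ is an $E$-ideal and $W\in\{0,E\}$, a contradiction. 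Thus $T$ is simple. Simplicity then upgrades $E\subseteq\End^0(T)$ to an equality: on a simple torus every nonzero endomorphism is an isogeny, so $D:=\End^0(T)$ is a division algebra; as $H_1(T,\BQ)$ is a left $D$-module we have $[D:\BQ]\mid 2g$, while $E\subseteq D$ gives $[D:\BQ]\ge[E:\BQ]=2g$, forcing $D=E$. This establishes (a), and simplicity will also yield $a(T)=0$ once $\rho(T)=0$ is known.

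The remaining properties I would obtain from the geometry of sesquilinear forms, and this is where the \emph{main obstacle} lies. The key dictionary is that a nonzero element of $\Hom(T,T^{\vee})$ corresponds to a $\BQ$-rational sesquilinear form on $E_\BR$ whose complexification, after diagonalizing $E_\BC=\bigoplus_{\tau\in\Sigma}\BC$, is supported on \emph{crossing} pairs $(\tau,\tau')$ — those with $\tau,\tau'$ in opposite halves $\Phi,\bar\Phi$ (invariance under $(x,y)\mapsto(\iota x,\iota y)$ forces the two local factors to be $+i$ and $-i$). Rationality makes this support a union of $G$-orbits, so a nonzero form requires an entire $G$-orbit of crossing pairs. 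Symmetrizing, such an orbit is a nonempty $G$-invariant graph on $\Sigma$; by primitivity it is \emph{connected}, and being all-crossing it is \emph{bipartite} with parts $\Phi,\bar\Phi$. A connected bipartite graph has a unique $2$-coloring, so $\{\Phi,\bar\Phi\}$ would be a $G$-invariant partition into two blocks of size $g\ge 2$, contradicting primitivity. Hence $\Hom(T,T^{\vee})=0$, which is (c); since $\NS(T)$ embeds into $\Hom(T,T^{\vee})$ as the symmetric (Hermitian) homomorphisms, this gives $\rho(T)=0$, i.e. (b), and then $a(T)=0$, since a positive algebraic dimension would produce a nonzero class in $\NS(T)$. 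Finally, for (d), $\Aut(T)=\End(T)^{\times}$ is the unit group of an order in $E$; Dirichlet's theorem with $r_1=0$ and $r_2=g$ (as $E$ is totally imaginary) gives free rank $g-1$, while the torsion is $\mu(E)=\{\pm1\}$ by the root-of-unity remark, so $\Aut(T)\cong\{\pm1\}\times\BZ^{g-1}$. Establishing the form-theoretic dictionary precisely and pushing the block-system argument through for \emph{every} choice of $\Phi$ is the crux of the proof, and the primitivity supplied by (ii) is exactly what makes it go.
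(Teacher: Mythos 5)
Your proposal is correct, and it reaches the theorem by a genuinely different route from the paper's. The paper obtains simplicity and $\End^0(T)=E$ by invoking the Oort--Zarhin structure theorem for smCM tori (Proposition \ref{OZcor}, via Lemma \ref{bigEnd}); you prove simplicity directly and more elementarily: for the rational subspace $W$ underlying a putative subtorus, the stabilizer $B=\{e\in E: eW\subseteq W\}$ is a subfield whose real span contains $\iota=\Psi^{-1}(i,\dots,i)$, hence $B\ne\BQ$, hence $B=E$ by (ii) -- and then a division-algebra dimension count ($[D:\BQ]$ both divides and is a multiple of $2g$) upgrades $E\subseteq\End^0(T)$ to equality. (The one step you assert without comment -- that $B\otimes_{\BQ}\BR$ is the full real stabilizer of $W_{\BR}$ -- is routine: the stabilizer is the kernel of a $\BQ$-defined linear map $E_{\BR}\to\Hom_{\BR}(W_{\BR},E_{\BR}/W_{\BR})$, so it is defined over $\BQ$.) For $\Hom(T,T^{\vee})=\{0\}$, the paper's Lemma \ref{bilinear} works inside $E$: the symmetric subspace $E^{+}$ for the form is a subfield containing the non-rational element $u_{-}^{2}$, hence equals $E$, forcing $B\equiv 0$; you instead complexify, note that $J$-invariance confines the support of the form to ``crossing'' pairs in $\Phi\times\bar{\Phi}$ (since $\tau(\iota)\tau'(\iota)=1$ exactly when the signs of $\pm i$ are opposite), and use primitivity twice -- a nonempty $G$-invariant graph on a primitive $G$-set is connected (its components would be blocks), and the unique $2$-colouring of a connected bipartite graph would make $\{\Phi,\bar{\Phi}\}$ a nontrivial block system. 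Both arguments consume exactly hypothesis (ii), which by Proposition \ref{intermediate} is primitivity; yours makes the permutation-group content visible and works uniformly for every choice of $\Phi$, while the paper's is shorter and stays inside the field $E$. You then deduce $\NS(T)=\{0\}$ as the Hermitian part of $\Hom(T,T^{\vee})=\{0\}$, whereas the paper proves Proposition \ref{PicardZero} separately via the Rosati involution, under the strictly weaker hypothesis that $E$ has no degree-$g$ subfield (which is what makes it a freestanding result); and you get $a(T)=0$ from $\NS(T)=\{0\}$ via the algebraic reduction of a torus (a positive algebraic dimension yields a positive-dimensional abelian quotient whose polarization pulls back to a nonzero N\'eron--Severi class), whereas the paper's Lemma \ref{simpleNotCM} rules out $T$ being abelian by Albert's classification -- both steps ultimately rest on the same fact about the maximal abelian quotient from \cite{BL}, so your shortcut is legitimate (and note simplicity is not actually needed for that implication). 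Your treatment of $\Aut(T)$ coincides with Proposition \ref{AutE}: Dirichlet with $r_1=0$, $r_2=g$, and torsion $\{\pm 1\}$ because (ii) excludes cyclotomic subfields, matching the paper's proof of assertion (d) up to the bookkeeping of which quadratic subfield would arise.
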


We present explicit examples of such fields (see Sections \ref{TrunExp}, \ref{SelmerT}, \ref{MoriZ}) for all $g \ge 2$. 

\begin{remark}
Some authors  call  number fields that enjoy the property (ii) of Theorem \ref{mainP} 
{\sl primitive}.  One may view   Proposition \ref{intermediate} below   as a justification of this  terminology.

\end{remark}

\begin{remark}
Suppose that $g \ge 2$ and a degree $2g$ number field $E$ enjoys the properties (i)-(ii) of Theorem \ref{mainP}.
 Let $\Gamma$ be an integer lattice of rank $2g$ in $E$
and $T_0=T_{E,\Psi,\Gamma}$ the corresponding complex torus of dimension $g$.
If $\Lambda$ is any subgroup of finite index in $\Gamma$ then it is also an integer lattice of rank $2g$ in $E\subset E_{\BR}$. By  Theorem \ref{mainP},
all complex tori $T=T_{E,\Psi,\Lambda}$ are special  and $\mathrm{End}^{0}(T) \cong E$.
On the other hand, the set  of all tori
$T_{E,\Psi,\Lambda}$ is precisely the {\sl isogeny class} of $T_0$ (up to an isomorphism).
Let $\mathcal{X}_g \to B_g$ be a versal family of complex tori of dimension $g$  that was constructed in \cite[Sect. 10]{BL}. (Every complex torus of dimension  $g$
appears as its fiber.)
Its base $B_g$ 
is a {\sl homogeneous} $\mathrm{GL}_{2g}(\BR)$-space. Each isogeny class is a $\mathrm{GL}_{2g}(\BQ)$-{\sl orbit} in $B_g$, which is a dense subset of $B_g$ ,
because  $\mathrm{GL}_{2g}(\BQ)$ is a dense subgroup of $\mathrm{GL}_{2g}(\BR)$. Therefore each isogeny class is dense in the {\sl ``moduli space''} $B_g/\mathrm{GL}_{2g}(\BZ)$
of complex tori of dimension $g$. This implies  that the subset of all $g$-dimensional {\sl special} tori 
is dense in the ``moduli space''. 
\end{remark}

\begin{remark}
\label{Voi}
Let $T=V/\Gamma$ be a complex torus of dimension $g \ge 2$ where $V$ is a $g$-dimensional complex vector space and $\Gamma$ is a discrete lattice of rank $2g$ in $V$.
Let $\phi_T$ be a holomorphic endomorphism of the complex Lie group $T$ 
and $\phi_{\Gamma}$ is the endomorphism of $\Gamma$ induced by $\phi_T$. Let 
$f(x) \in \BZ[x]$ be the  characteristic polynomial of $\phi_{\Gamma}$, which is monic of degree $2g$.  Suppose that 
 the polynomial $f(x)$ is separable, has no real roots and its Galois group $\Gal(f)$ over $\BQ$ is the full symmetric group $\mathbf{S}_{2g}$.
Such a pair $(T, \phi_T)$ is called a {\sl scenic torus} in \cite[Sect. 3, p. 271]{GS}.
 C. Voisin \cite[Sect. 1]{Vo04} proved that  a scenic  $T$ is {\sl not} algebraic and its Picard number is $0$.  It follows from \thmref{mainP}   that 
 $T$  is actually  special.  Indeed, 
let $E$ be the $\BQ$-subalgebra of $\End^0(T)$ generated by $\phi_T$. The conditions on $f(x)$ and $\Gal(f)$ imply that $f(x)$ is irreducible and $E\cong \BQ[x]/f(x)\BQ[x]$ is a  purely imaginary number field of degree $2g$.  The condition on $\Gal(f)$ implies
(thanks to  Example \ref{Alt} below) that $E$ has no proper subfields except $\BQ.$    Thus all conditions of \thmref{mainP}  are met. 

\end{remark}

The proof of  \thmref{mainP} is based on  results of  \cite{OZ}.  Properties  (b), (c), (d)   of \defnref{specialT} are consequences of  the following assertions concerning  
 the {\bf   endomorphism algebra } $$\mathrm{End}^0(T)=\mathrm{End}(T)\otimes \BQ$$  of  $T.$
 Recall \cite{OZ} that  $\mathrm{End}^0(T)$
 is a finite-dimensional (not necessarily semisimple) $\BQ$-algebra. 
 
\begin{prop}
\label{PicardZero}
Let $T$ be a complex torus of dimension $g \ge 2$.  Suppose that
$\mathrm{End}^0(T)$
is a degree $2g$ number field that does not contain a subfield of degree $g$.
Then 
\begin{itemize}
\item[(a)]
$T$ is a simple complex torus of algebraic dimension $0$;
\item[(b)]
The  Picard number $\rho(T)$ of $T$ is $0$, i.e., its N\'eron-Severi group $\mathrm{NS}(T)=\{0\}$.
\end{itemize}
\end{prop}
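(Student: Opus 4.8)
The plan is to work throughout with the rational homology $M:=H_1(T,\BQ)$, viewed as a module over $E:=\mathrm{End}^0(T)$. Since $E$ is a field with $[E:\BQ]=2g=\dim_{\BQ}M$, the space $M$ is a one-dimensional $E$-vector space; I fix an identification $M\cong E$. Then $V:=M\otimes_{\BQ}\BR=E_{\BR}$, and its complex structure $J$ commutes with the $E_{\BR}$-action, hence is multiplication by an element $\tau\in E_{\BR}$ with $\tau^2=-1$. In particular every archimedean completion of $E$ contains $\sqrt{-1}$, so $E$ is totally imaginary and $\tau$ records a CM-type. I use the standard dictionary: subtori of $T$ correspond to $\BQ$-subspaces $W\subseteq E$ with $\tau W_{\BR}\subseteq W_{\BR}$; a direct-sum decomposition of $M$ into such subspaces corresponds to an isogeny decomposition of $T$; and $\NS(T)\otimes\BQ$ and $\Hom(T,T^{\vee})\otimes\BQ$ are computed by rational bilinear forms interacting with $\tau$.

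First I would prove simplicity, which needs only that $E$ is a field with $[E:\BQ]=2\dim T$. If $T$ were not simple, choose a subtorus $S$ of minimal positive dimension, so $S$ is simple, and set $W=H_1(S,\BQ)\subsetneq E$. For $a\in E^{\times}$ the translate $aW$ is again a simple sub-Hodge structure isomorphic to $W$, and $aW\cap W$, being a sub-Hodge structure of $W$, is $0$ or $W$. As $\sum_{a\in E^{\times}}aW$ is a nonzero $E$-submodule of $E$, it equals $E$; taking a maximal subfamily of translates in direct sum and enlarging it (via simplicity) whenever it is proper yields $E=\bigoplus_{i=1}^{r}a_iW$. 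This is a decomposition into sub-Hodge structures, so $T$ is isogenous to $S^{r}$ and $E\cong M_r(\mathrm{End}^0(S))$. Since $E$ is a field, $r=1$, i.e. $S=T$, a contradiction; hence $T$ is simple.

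The heart of the argument is the vanishing of the N\'eron--Severi group in (b), and this is where the hypothesis ``no subfield of degree $g$'' is consumed. Since $\NS(T)\hookrightarrow\Hom(T,T^{\vee})$, it suffices to show $\Hom(T,T^{\vee})=\{0\}$. Suppose $\phi\colon T\to T^{\vee}$ is nonzero. As $T$ is simple, so is $T^{\vee}$, and $\phi$ is an isogeny; moreover $\mathrm{End}^0(T^{\vee})\cong E$, so $\Hom(T,T^{\vee})\otimes\BQ$ is a one-dimensional $E$-space. Comparing the left $\mathrm{End}^0(T^{\vee})$-action with the right $E$-action through $\phi$ exhibits $\phi$ as a $\la^{-1}$-semilinear automorphism of the $E$-line $E$ for a unique $\la\in\Aut(E/\BQ)$; concretely $\phi$ is multiplication by some $u\in E^{\times}$ followed by $\la^{-1}$. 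Compatibility of $\phi$ with the complex structures (on $H_1(T^{\vee})$ the complex structure is multiplication by $-\tau$) forces $\phi\circ\tau=-\tau\circ\phi$, whence $\la(\tau)=-\tau$, so $\la\neq\mathrm{id}$. Finally $\phi^2$ commutes with $\tau$ and is rational, so $\phi^2\in\mathrm{End}^0(T)=E$; being simultaneously $\la^{-2}$-semilinear and $E$-linear it forces $\la^{2}=\mathrm{id}$. Thus $\la$ is a nontrivial involution, its fixed field $E_0=E^{\la}$ satisfies $[E:E_0]=2$, hence $[E_0:\BQ]=g$ --- a degree-$g$ subfield, contradicting the hypothesis. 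Therefore $\Hom(T,T^{\vee})=\{0\}$ and $\NS(T)=\{0\}$.

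It remains to deduce $a(T)=0$. A simple complex torus has algebraic dimension $0$ or $g$, the latter exactly when $T$ is an abelian variety, since the algebraic reduction is a quotient onto an abelian variety whose kernel is a subtorus, hence $0$ or $T$. If $a(T)=g$ then $T$ is projective and carries an ample class, contradicting $\NS(T)=\{0\}$; so $a(T)=0$. (Equivalently, an abelian $T$ with $\mathrm{End}^0(T)$ a degree-$2g$ field would be of CM type with $E$ a CM field, again forcing a totally real degree-$g$ subfield.) The main obstacle is step (b): identifying an element of $\Hom(T,T^{\vee})$ with a semilinear automorphism and recognizing the resulting involution $\la$ --- the ``complex conjugation'' of the generalized CM-structure --- as producing precisely a subfield of degree $g$; everything else is formal once the rank-one $E$-module picture is in place.
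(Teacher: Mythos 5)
Your proposal is correct, but it takes a genuinely different route from the paper's at every step. For simplicity, the paper invokes the Oort--Zarhin structure theorem (Proposition \ref{OZcor} via Lemma \ref{bigEnd}: $D(T)\cong \mathrm{Mat}_r(D(S))$ can be a field only if $r=1$), whereas you reprove the needed case by hand through the isotypic decomposition $E=\oplus_i a_iW$ into $E^{\times}$-translates of a minimal sub-Hodge structure --- self-contained, at the cost of re-deriving a cited result. For (b), the paper does not go through $\Hom(T,T^{\vee})$ in this proposition at all: it takes $\mathcal{L}$ with $c_1(\mathcal{L})\neq 0$, uses simplicity to make $\phi_{\mathcal{L}}$ an isogeny (so $\mathcal{L}$ is a polarization in the sense of \cite{BL}), and then analyzes the Rosati involution attached to $\mathcal{L}$: a nontrivial Rosati involution would have a degree-$g$ fixed subfield, while a trivial one contradicts \cite[Ch.~5, Prop.~1.2]{BL} (forcing $2g \mid g$). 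You instead prove the stronger vanishing $\Hom(T,T^{\vee})=\{0\}$ directly, via $J=\tau\in E_{\BR}$ and the observation that an isogeny $T\to T^{\vee}$ induces a $\mu$-semilinear bijection of the $E$-line with $\mu\in\Aut(E/\BQ)$, $\mu_{\BR}(\tau)=-\tau$, and $\mu^2=\mathrm{id}$ (because the square commutes with $\tau$ and is rational, hence lies in $\mathrm{End}^0(T)=E$), so the fixed field of $\mu$ is a degree-$g$ subfield --- contradiction. Notably, this is sharper than the paper's own Proposition \ref{HomZero}, which assumes $E$ has \emph{no} proper subfields except $\BQ$; you get away with the weaker ``no degree-$g$ subfield'' hypothesis because simplicity makes the relevant map invertible, so the adjoint $\mu$ is an honest involutive automorphism, whereas the paper's Lemma \ref{bilinear} must handle a possibly degenerate form $B$ and therefore argues with the subspaces $E^{\pm}$, consuming the absence of quadratic and of all intermediate subfields. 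Finally, for (a) the paper goes through Albert's classification (Lemma \ref{simpleNotCM}, plus Remark \ref{nonCMii}: no degree-$g$ subfield forces $E$ non-CM), while you deduce (a) from (b): a simple torus with $a(T)=g$ is abelian, hence projective, hence has an ample class in $\NS(T)$ --- logically sound, since your proof of (b) uses only simplicity, not (a). One cosmetic repair: ``$\phi^2$'' does not typecheck as a morphism, since $\phi\colon T\to T^{\vee}$; you mean the square of the induced map $\psi\colon E\to E$ obtained after an $E$-module identification $M^{\vee}\cong E$ carrying the complex structure to multiplication by $-\tau$ (the conclusion $\mu^{2}=\mathrm{id}$ is independent of this choice, as changing it replaces $\psi$ by $c\psi$ with $c\in E^{\times}$), and you should state explicitly that $\mathrm{End}^0(T)$ equals the algebra of rational $\tau$-commuting endomorphisms of $M$, which is exactly what licenses the step $\psi^{2}\in E$.
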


\begin{prop}
\label{HomZero}
Let $T$ be a complex torus of dimension $g \ge 2$.  Suppose that
$\mathrm{End}^0(T)$
is a degree $2g$ number field that does not contain a proper subfield except $\BQ$.

If $T^{\vee}$ is the dual of $T$ then $\Hom(T,T^{\vee})=\{0\}$.
In particular, $T$ is not isogenous to  $T^{\vee}$.
\end{prop}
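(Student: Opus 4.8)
The plan is to reduce the statement first to a computation with bilinear forms and then to a purely group‑theoretic assertion, where the primitivity of $E$ enters decisively.

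First I would pass to the rational vector space $U=H_1(T,\BQ)$ and recall the analytic description of homomorphisms into the dual torus (as in \cite{OZ}, cf. \cite{BL}). Since $\Hom(T,T^\vee)$ is torsion free, it suffices to show that $\Hom^0(T,T^\vee)=\Hom(T,T^\vee)\otimes\BQ$ vanishes, and this group is identified with the space $W$ of $\BQ$-bilinear forms $b$ on $U$ whose $\BR$-linear extension is invariant under the complex structure $J$ of $T$, i.e. $b(Jx,Jy)=b(x,y)$. (Inside $W$ the alternating forms give $\NS^0(T)$, which already vanishes by \propref{PicardZero}; so what really must be killed is the symmetric part, but it is cleaner to annihilate all of $W$ at once.)

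Next I would exploit the field structure. As $E=\End^0(T)$ is a field of degree $2g=\dim_\BQ U$, the space $U$ is one-dimensional over $E$, so I may take $U=E$ with $E$ acting by multiplication; since $E$ acts holomorphically, $J$ commutes with $E$ and is therefore multiplication by an element $j\in E_\BR$ with $j^2=-1$. Because $E$ is purely imaginary, $E_\BR\cong\BC^g$, and $j$ determines the subset $\Phi=\{\sigma\in\Sigma:\sigma(j)=i\}$ of $\Sigma:=\Hom(E,\BC)$, with $|\Phi|=g$; complex conjugation $c$, viewed inside $G:=\Gal(\widetilde{E}/\BQ)$ for the Galois closure $\widetilde{E}$, acts on $\Sigma$ as a fixed-point-free involution interchanging $\Phi$ and its complement $\overline{\Phi}$. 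Using the nondegeneracy of the trace form to identify $E^{*}\cong E$, I would then identify $\mathrm{Bil}_\BQ(E)\cong E\otimes_\BQ E=\prod_\alpha L_\alpha$, a product of number fields indexed by the $G$-orbits $O_\alpha$ on $\Sigma\times\Sigma$, with $[L_\alpha:\BQ]=|O_\alpha|$. Diagonalising over $\BC$, the $J$-invariance condition becomes the vanishing of every ``same-side'' coordinate $b_{\sigma\tau}$ (those with $\sigma,\tau$ both in $\Phi$ or both in $\overline{\Phi}$); since each $L_\alpha$ is a field, a single same-side pair in $O_\alpha$ forces the entire $L_\alpha$-component to vanish. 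Hence $W\cong\bigoplus_\alpha L_\alpha$, the sum running over those orbits $O_\alpha$ consisting \emph{entirely} of ``opposite'' pairs, and the proposition is reduced to showing that no such purely-opposite orbit exists.

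The heart of the argument — and the step I expect to be the main obstacle — is this last, purely group-theoretic claim. By the dictionary between subfields and subgroups, the hypothesis that $E$ has no proper subfield except $\BQ$ means exactly that $G$ acts \emph{primitively} on $\Sigma$, the point stabiliser being a maximal subgroup (cf. the discussion around \propref{PicardZero} and the examples). Suppose some orbit $O$ were purely opposite. Its underlying undirected orbital graph is vertex-transitive and, being nontrivial, has an edge; its connected components would form a $G$-invariant partition, so by primitivity the graph is connected. Purely-opposite means every edge joins $\Phi$ to $\overline{\Phi}$, i.e. the graph is bipartite with colour classes $\Phi,\overline{\Phi}$; for a connected bipartite graph this $2$-colouring is unique, so $G\le\Aut$ must preserve the partition $\{\Phi,\overline{\Phi}\}$. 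This is a $G$-invariant system of two blocks each of size $g\ge 2$, a nontrivial block system, contradicting primitivity. Therefore no purely-opposite orbit exists, $W=0$, and $\Hom(T,T^\vee)=\{0\}$. The delicate points to get right are the precise form/orbital dictionary — in particular the descent step that lets a single vanishing coordinate annihilate a whole field factor — and the passage from primitivity of $E$ to connectivity of the orbital graph; once these are in place, the bipartite–block contradiction is short.
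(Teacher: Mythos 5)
Your proof is correct, and its decisive step is genuinely different from the paper's. Both arguments share the same initial reduction: since $\Gamma_{\BQ}\cong E$ is one-dimensional over $E=\End^0(T)$, the field $E$ is its own centralizer, so the complex structure $J$ lies in $E_{\BR}$, and $\Hom(T,T^{\vee})$ injects into the space of $\BQ$-rational bilinear forms $B$ with $B(Jx,Jy)=B(x,y)$ (the paper realizes this via the sesquilinear form $H(v_1,v_2)=F(v_1)(v_2)$ and $B=\mathrm{Im}(H)$, with $B\equiv 0\Rightarrow H\equiv 0\Rightarrow f=0$). From there the paper's Lemma~\ref{bilinear} stays entirely inside $E$: $J$-invariance gives $B(Jx,y)=-B(x,Jy)$, so the $\BQ$-rational subspace $E^{-}$ of $B$-antisymmetric elements is nonzero; a nonzero $u_{-}\in E^{-}$ yields $u_{+}=u_{-}^2$ acting $B$-symmetrically with $u_{+}\notin\BQ$ (no quadratic subfield), so the subfield $E^{+}$ of symmetric elements is all of $E$, making $J$ simultaneously symmetric and antisymmetric and forcing $B\equiv 0$. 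You instead pass to the Galois closure and use the hypothesis in its equivalent primitive-action form (\propref{intermediate}): you decompose $\mathrm{Bil}_{\BQ}(E)\cong E\otimes_{\BQ}E$ into field factors $L_\alpha$ indexed by $G$-orbits on $\Sigma\times\Sigma$, note that $J$-invariance kills exactly the coordinates $b_{\sigma\tau}$ with $\sigma(j)\tau(j)=-1$ (the same-side pairs for $\Phi$), and rule out purely-opposite orbits by the bipartite orbital-graph/block-system contradiction --- all of which checks out, including the point that a single vanishing coordinate annihilates a field factor (it is the kernel of a field embedding $L_\alpha\hookrightarrow\BC$) and that connectivity of the orbital graph follows from primitivity since components form blocks. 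Your route is heavier (Galois closure, tensor decomposition) but buys more: it computes $\Hom^0(T,T^{\vee})$ exactly as $\bigoplus_\alpha L_\alpha$ over purely-opposite orbits for \emph{any} degree $2g$ field acting on $T$, making visible when the group is nonzero (compare the CM-subfield condition in \thmref{mainCM}), whereas the paper's argument is shorter and needs no machinery beyond subfields of $E$ itself. One line should be added: pure imaginarity of $E$ is not a hypothesis of \propref{HomZero} but follows from $j\in E_{\BR}$ with $j^2=-1$, since a real place of $E$ would give an $\BR$-algebra homomorphism $E_{\BR}\to\BR$ sending $j$ to a real square root of $-1$; with that inserted, your write-up is complete.
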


\begin{prop}
\label{AutE}
Let $T$ be a complex torus of positive dimension.  Suppose that the endomorphism algebra
$\mathrm{End}^0(T)$
is a purely imaginary number field of degree $2s$ that does not contain roots of unity except $\{1,-1\}$.
Let $\Aut(T)$ be the automorphism group of the complex Lie group $T$.

 Then 
$\Aut(T)$ is isomorphic to
$\{\pm 1\} \times \BZ^{s-1}$. In particular, $\Aut(T)$  is commutative and its torsion subgroup is a cyclic group of order $2$.

\end{prop}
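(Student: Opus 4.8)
The plan is to translate the geometric automorphism group into an arithmetic unit group and then invoke Dirichlet's unit theorem. First I would recall that an automorphism of the complex Lie group $T=V/\Lambda$ is a $\BC$-linear automorphism of $V$ preserving $\Lambda$ whose inverse also preserves $\Lambda$; equivalently, it is an invertible element of the endomorphism ring $\End(T)$ whose inverse again lies in $\End(T)$. Hence $\Aut(T)$ is canonically isomorphic to the group of units $\End(T)^{\times}$. Since $\End(T)$ is a subring of $\End^0(T)=E$ that is finitely generated as a $\BZ$-module and satisfies $\End(T)\otimes_{\BZ}\BQ=E$, it is an order $\OO$ in the number field $E$. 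Thus the problem reduces to determining the unit group $\OO^{\times}$ of an order in $E$.

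Next I would apply Dirichlet's unit theorem, which holds for an arbitrary order $\OO$ (not only for the maximal order $\OO_E$): the group $\OO^{\times}$ is finitely generated of the form $\mu(\OO)\times \BZ^{r_1+r_2-1}$, where $\mu(\OO)$ is the finite cyclic group of roots of unity contained in $\OO$, $r_1$ is the number of real embeddings of $E$, and $r_2$ is the number of conjugate pairs of complex embeddings. Because $E$ is purely imaginary of degree $2s$, it has no real embeddings, so $r_1=0$ and $r_2=s$; consequently the free rank equals $r_1+r_2-1=s-1$.

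It then remains to pin down the torsion $\mu(\OO)$. Since $\OO$ is a subring of $E$ containing $1$, we have the inclusions $\{\pm 1\}\subseteq \mu(\OO)\subseteq \mu(E)$, where $\mu(E)$ denotes the group of all roots of unity in $E$. By hypothesis $\mu(E)=\{\pm 1\}$, whence $\mu(\OO)=\{\pm 1\}$, a cyclic group of order $2$. Combining this with the rank computation yields $\Aut(T)\cong \OO^{\times}\cong \{\pm 1\}\times \BZ^{s-1}$, which is exactly the asserted structure; in particular $\Aut(T)$ is commutative with torsion subgroup $\{\pm 1\}$.

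The numerical computation is routine; the real content, and the step I would treat most carefully, lies in the two structural reductions at the outset: verifying that \emph{every} holomorphic group automorphism of $T$ is precisely a unit of $\End(T)$ (so that no extra automorphisms are introduced by the transcendental analytic structure), and that $\End(T)$ is genuinely an order in $E$, so that Dirichlet's theorem applies verbatim even though $\End(T)$ need not coincide with the maximal order $\OO_E$.
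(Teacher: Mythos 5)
Your proposal is correct and follows essentially the same route as the paper: identify $\Aut(T)$ with the unit group of the order $\End(T)$ in the purely imaginary field $E$, compute $r_1=0$, $r_2=s$, and apply Dirichlet's unit theorem (which, in the form cited by the paper from Borevich--Shafarevich, indeed holds for arbitrary orders), with the torsion pinned down by the hypothesis that $E$ contains no roots of unity other than $\pm 1$. Your version merely spells out the steps the paper's terse proof leaves implicit.
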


 As a by-product  we get  examples of poor manifolds for any dimension $>1$.

 The notion of a { poor} manifold was
  introduced  in \cite{BZ20}.   It is a complex compact connected manifold containing neither rational curves nor analytic subsets of codimension 1 (and, {\it a fortiori,}  having algebraic dimension 0). It was proven in \cite{BZ20}  that  for a $\BP^1-$bundle $X$   over a poor manifold  $Y$  the  group $\Bim(X)$
 of its bimeromorphic selfmaps coincides with the group
 $\Aut(X)$ of its biholomorphic  automorphisms; the latter
  has the commutative identity  component $\Aut_0(X) $ and the order of any finite subgroup of the quotient
$\Aut(X)/\Aut_0(X) $ is bounded by a constant depending on $X$ only.

    As it was mentioned in \cite{BZ20}, a complex torus   $T$ has algebraic dimension $a(T)=0$  if and only if it is poor.
      There exists an explicit example of a $K3$ surface  that 
    does {\sl not} contain analytic curves  (\cite {McM}) and therefore  is poor.  
We  prove the following 

 \begin{theorem}
   \label{mainCM}
   Let $T$ be a complex  torus of dimension $g \ge 2$.  Suppose that 
$\mathrm{End}^0(T)$ contains 
 a degree $2g$ number field $E$ with the same $1$ such that $E$ does not contain a CM subfield.

Then $T$ has algebraic dimension $0$ and therefore is poor. In addition,  there  exist a simple complex torus $S$
and a  positive integer $r$ such that $T$ is isogenous to the self-product $S^r$ of 
$S$.
   \end{theorem}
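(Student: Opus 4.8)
The plan is to make everything hinge on one observation: since $E\subseteq \mathrm{End}^0(T)$ is a field of degree $2g$ acting on $T$ and sharing its identity with $\mathrm{End}^0(T)$ (the hypothesis ``with the same $1$''), it acts unitally on the rational homology $V_{\BQ}:=H_1(T,\BQ)=\Lambda\otimes\BQ$, a $\BQ$-vector space of dimension $2g$. Thus $V_{\BQ}$ is an $E$-vector space of dimension $2g/[E:\BQ]=1$, so $V_{\BQ}\cong E$ as an $E$-module. I would carry both conclusions out of this single structure.

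To prove $a(T)=0$ I would pass to the algebraic reduction. The subtori $T'\subseteq T$ with $T/T'$ abelian are closed under intersection (a subtorus of an abelian variety is abelian), so there is a smallest such $T_0$ and a maximal abelian quotient $q\colon T\to A:=T/T_0$ with $\dim A=a(T)$, through which every homomorphism from $T$ to an abelian variety factors (see \cite{BL}). Hence $A$ is functorial in $T$, giving a ring homomorphism $\mathrm{End}^0(T)\to \mathrm{End}^0(A)$ that sends $1$ to $1$; restricted to the field $E$ it is injective, so $E\hookrightarrow \mathrm{End}^0(A)$. If $a(T)>0$, then $H_1(A,\BQ)$ is a nonzero $E$-vector space of $\BQ$-dimension $2\dim A=2a(T)$, whence $2g=[E:\BQ]\le 2a(T)\le 2g$ and therefore $a(T)=g$. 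In that case $q$ is an isogeny, $T$ is isogenous to the abelian variety $A$, and the field $E$ sits in $\mathrm{End}^0(A)$ with the maximal possible degree $2\dim A$. By the classical theory of complex multiplication this forces $E$ to be a CM field (see \cite{LangCM}), contradicting the hypothesis that $E$ contains no CM subfield (it would be a CM subfield of itself). Hence $a(T)=0$, and since a complex torus is poor precisely when $a(T)=0$, $T$ is poor.

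For the splitting $T\sim S^r$ I would use that $V_{\BQ}\cong E$ is a \emph{simple} $E$-module, together with the fact that every nonzero $e\in E$ is invertible in $\mathrm{End}^0(T)$ and so acts as an isogeny of $T$. Recall that the subtori of $T$ correspond to the $\BQ$-subspaces $W\subseteq V_{\BQ}$ whose real span is stable under the complex structure $J$ of $V=V_{\BQ}\otimes_{\BQ}\BR$. Choose a nonzero subtorus $S\subseteq T$ of minimal dimension; it is simple, with $W:=H_1(S,\BQ)$ a $J$-stable subspace of $E$. For $e\in E^{\times}$ multiplication by $e$ is an isogeny carrying $S$ onto the subtorus $eS$ with $H_1(eS,\BQ)=eW$, so each $eS$ is simple and isogenous to $S$. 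Since some $0\neq w\in W$ gives $Ew=E$, the subspaces $eW$ already span $V_{\BQ}$. Now pick a maximal collection $e_1W,\dots,e_rW$ forming a direct sum $U$: if some $e'W\not\subseteq U$, then $e'W\cap U$ is a $J$-stable subspace of $e'W=H_1(e'S,\BQ)$ with $e'S$ simple, hence is $0$, so $U\oplus e'W$ would be a larger independent collection, a contradiction. Therefore $E=\bigoplus_{i=1}^r e_iW$, which exhibits $T$ as isogenous to $\prod_{i=1}^r e_iS\sim S^r$ (forcing $r=g/\dim S$).

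The main obstacle I anticipate lies in the first half: one must justify carefully that the maximal abelian quotient exists, is functorial, and has dimension $a(T)$, and then invoke the precise classical statement that an abelian variety whose endomorphism algebra contains a number field of degree twice its dimension necessarily has complex multiplication by a CM field. The second half is essentially formal; its only delicate point is the use of simplicity of the translates $e'S$ to upgrade a spanning family of subtori into an honest direct-sum decomposition of $V_{\BQ}$.
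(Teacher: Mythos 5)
Your proposal is correct in overall structure but takes a genuinely different route from the paper. The paper derives both conclusions simultaneously from the Oort--Zarhin result (Proposition \ref{OZcor}): $T$ is isogenous to $S^r$ with $S$ simple and $D(S)$ isomorphic to a subfield of $E$ satisfying $[D(S):\BQ]=2\dim S$; since $E$ contains no CM subfield, $D(S)$ is not CM, Lemma \ref{simpleNotCM} (via Albert's classification) gives $a(S)=0$, and Lemma \ref{poorprop} (products of poor manifolds are poor) yields $a(T)=0$. You instead (a) prove the splitting $T\sim S^r$ by hand, using that $H_1(T,\BQ)\cong E$ is one-dimensional over $E$, translating a minimal (hence simple) subtorus by elements of $E^{\times}$ and upgrading the resulting spanning family to a direct sum via simplicity of the translates; this part is correct and is a nice self-contained replacement for the isotypicity statement of Proposition \ref{OZcor}, although it does not recover the finer fact (which the paper uses) that $\End^0(S)$ is isomorphic to a subfield of $E$ of degree $2\dim S$ --- your argument for $a(T)=0$ does not need it, since you work with the algebraic reduction $q\colon T\to A$ directly. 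That reduction step, the functoriality of $A$, the injectivity of $E\to\End^0(A)$, and the dimension count forcing $a(T)=g$ and $q$ to be an isogeny are all sound.

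However, the final step of your first half invokes a false statement: it is \emph{not} true that an abelian variety $A$ with a unital field $E\subset\End^0(A)$ of degree $2\dim A$ forces $E$ to be a CM field. Counterexample: let $B$ be an elliptic curve with $\End^0(B)=K=\BQ(\sqrt{-1})$ and $A=B^2$, so $\End^0(A)=\mathrm{Mat}_2(K)$. The quartic field $E=\BQ[x]/(x^4-2x^2+2)\BQ[x]$ contains $K$ (if $\alpha^4-2\alpha^2+2=0$ then $(\alpha^2-1)^2=-1$), hence embeds unitally into $\mathrm{Mat}_2(K)\cong\End_K(E)$ via the regular representation; but $E$ is a totally imaginary quartic field whose unique quadratic subfield is $K$, so $E$ has no totally real quadratic subfield and is not CM. The correct classical statement --- and the reason the theorem's hypothesis reads ``$E$ contains no CM subfield'' rather than ``$E$ is not CM'' --- is that in your situation $E$ \emph{contains} a CM subfield: since $H_1(A,\BQ)$ is one-dimensional over $E$, the field $E$ is its own centralizer and therefore contains the center of $\End^0(A)$; your part (a) argument (or Proposition \ref{OZcor}) shows $A$ is isogenous to a power of a simple abelian variety $B$ whose endomorphism field has degree $2\dim B$, which by Albert's classification (as in the proof of Lemma \ref{simpleNotCM}) must be a CM field embedding into $E$. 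With this corrected statement your contradiction with the hypothesis goes through verbatim, so the gap is real but local and repairable.
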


\begin{remark}  Let us   note an additional property of special tori. 
The notion of the invariant Brauer group $\mathrm{Br}_T(T)$ of a complex toris $T$ was introduced in \cite{OSVZ} (see also \cite{Cao}).
This group is a finite abelian group of exponent  $1$  or $2$.

We claim that $\mathrm{Br}_T(T)=\{0\}$ if $T$ is {\sl special}. Indeed, $\mathrm{Br}_T(T)$ is isomorphic to a subquotient of $\Hom(T,T^{\vee})$ \cite[Sect. 3.3, displayed formula (13) and Prop. 3.19]{OSVZ}.
Since $\Hom(T,T^{\vee})=\{0\}$ for {\sl special} $T$, the group $\mathrm{Br}_T(T)$ is also $\{0\}$.

\end{remark}

The paper is organized as follows.  In  \secref{poortori} we give  some background.  \secref{mainproof} contains proofs  of  main results.  In  \secref{poortori1} we prove \thmref{mainCM}.
   In  Sections \ref{explicitexamples}, \ref{TrunExp}, \ref{SelmerT}, \ref{MoriZ}, \ref{Infinite} we present  a plenty of  explicit examples  of certain number fields that give rise to special  tori. 
   (Notice that explicit examples of simple complex 2-dimensional tori $T$ with $a(T)=0$  and Picard number $0$ were given in \cite[Appendix]{EF} and \cite[ Example 7.4] {BL} in terms of their period lattices.)

   {\bf Acknowledgements}. We are grateful to the referee, whose thoughtful comments helped to improve the exposition.
   
 \section{ A construction from the Galois  Theory}
 \label{poortori}

As usual, $\BZ,\BQ,\BR,\BC$ stand for the ring of integers and fields of rational, real, and complex numbers respectively.
We write $\bar{\BQ}$ for an algebraic closure of $\BQ$.

 Let us recall the properties of  a purely imaginary  number field $E.$

 We may view it as $E=\BQ(\al), $ where $\al\in  E$ and  there is an irreducible    over  $\BQ$  polynomial   $f(x)\in\BQ[x]$  of degree $2g$  such that $f(\al)=0.$  The property of $E$ to be {\sl purely imaginary} means that $f(x)$ has {\sl no} real roots  in $\BC$.    Let $\al_1,\ov{\al_1}\dots,\al_g,\ov{\al_g}$ be roots of $f(x)$ (here $\ov{\alpha}_j$ stands for the complex conjugate of $\alpha_j$).
 There are $2g$  field embeddings $E\hookrightarrow\BC, $ namely,  two for every $j, 1\le  j\le g:$
 
  $$\sigma_j:1\to 1, \al\to \al_j$$ and $$ \ov{\sigma_j}:1\to 1, \al\to \ov{\al_j}.$$   
For every choice  of $g-$tuple $(\tau_1, \dots, \tau_g), $ where each $\tau_j$ is either $\sigma_j$ or 
$\ov{\sigma_j} $ 
 we  define an injective $\BQ$-algebra homomorphism 
 
 \begin{equation}\label{Psi1000}
\Psi:   E\hookrightarrow \oplus_{j=1}^g\BC=\BC^{g},  E\ni  e\mapsto (\tau_1(e), \dots, \tau_g(e))\in \BC^{g}\end{equation}
that extends by $\BR$-linearity to a homomorphism $\Psi :E_{\BR}\to\BC^g$ of $\BR$-algebras (we keep the notation $\Psi$).
Actually, $\Psi$ is an isomorphism of $\BR$-algebras. Indeed,  let $\{\beta_1, \dots, \beta_{2g}\}$ be a basis of the $2g$-dimensional
$\BQ$-vector space $E$.
It is proven in \cite[Proof of Th. 4.1 on pp. 15--16]{LangCM} that the $2g$-element set 
$$\{\Psi(\beta_1), \dots, \Psi(\beta_{2g})\}\subset \BC^g$$
is linearly independent over $\BR$. It follows
that the image $\Psi(E_{\BR})$ has $\BR$-dimension $2g$. Since
$$\dim_{\BR}(E_{\BR})=2g=\dim_{\BR}(\BC^g),$$
$\Psi :E_{\BR}\to\BC^g$ is an {\sl isomorphism} of  $\BR$-algebras.
 There are precisely $2^{g}$    isomorphisms of $\BR$-algebras $E_{\BR}$ and $\BC^g$
 of the form $\Psi=(\tau_1, \dots, \tau_g),$ where $\tau_i$ are defined in \eqref{Psi1000}.
 We will use these isomorphisms in order to construct complex tori $E_{\BR}/\Gamma$
 with needed properties, where $\Gamma$ is a discrete lattice of maximal rank in $E$.
We   will need the following elementary construction from Galois theory.

Let $n \ge 3$ be an integer and $f(x) \in \BQ[x]$  a degree $n$ {\sl irreducible} polynomial. This means that the $\BQ$-algebra
$$K_f=\BQ[x]/f(x)\BQ[x]$$ is a degree $n$ number field. Let $\RR_f\subset \bar{\BQ}$ be the $n$-element set of roots of $f(x)$. If $\alpha \in \RR_f$
then there is  an isomorphism of $\BQ$-algebras
\begin{equation}
\label{isoF}
\Phi_{\alpha}: K_f=\BQ[x]/f(x)\BQ[x] \cong \BQ(\alpha),  x \mapsto \alpha
\end{equation}
where $\BQ(\alpha)$ is the subfield of $\bar{\BQ}$ generated by $\alpha$.   Clearly, $K_f$ (and hence, $\BQ(\alpha)$)
is {\sl purely imaginary} if and only if $f(x)$ has {\sl no} real roots.

Let $\BQ(\RR_f)\subset \bar{\BQ}$ be the splitting field
of $f(x)$, i.e., the subfield   of $\bar{\BQ}$ generated by $\RR_f$.  Then $\BQ(\RR_f)\subset\bar{\BQ}$  is a finite Galois extension of $\BQ$ containing $\BQ(\alpha)$.
We write $G=\mathrm{Gal}(f)$ for the Galois group $\mathrm{Gal}(\BQ(\RR_f)/\BQ)$ of $\BQ(\RR_f)/\BQ$, which may be viewed as a certain subgroup of the group $\mathrm{Perm}(\RR_f)$ of permutations of $\RR_f$. The irreducibility of $f(x)$ means that $\mathrm{Gal}(f)$ is a 
{\sl transitive} permutation subgroup of   $\mathrm{Perm}(\RR_f)$. Let us consider the stabilizer subgroup
\begin{equation}
\label{Galpha}
G_{\alpha}=\{\sigma \in G\mid \sigma(\alpha)=\alpha\}\subset G.
\end{equation}
Clearly, $G_{\alpha}$ coincides with the Galois group $\mathrm{Gal(\BQ}(\RR_f)/\BQ(\alpha))$ of Galois extension $\BQ(\RR_f)/\BQ(\alpha)$.
If one  varies $\alpha$ in $\RR_f$ then all the subgroups $G_{\alpha}$ constitute a conjugacy class in $G$.

The following assertion is certainly well known but we failed to find a suitable reference.

\begin{prop}
\label{intermediate}
The following conditions are equivalent.
\item[(i)] $K_f$ has no proper subfields except $\BQ.$
\item[(ii)]   $\BQ(\alpha)$ has no proper subfields except $\BQ.$
\item[(iii)]  
$G_{\alpha}$ is a maximal subgroup in $G$.
\item[(iv)]
$G$ is a primitive permutation subgroup of $\mathrm{Perm}(\RR_f)$.
\end{prop}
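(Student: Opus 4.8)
The plan is to prove the four-fold equivalence by pairing off the conditions according to the tool each one naturally invokes, and then to use transitivity of ``$\Leftrightarrow$''. The equivalence (i)\,$\Leftrightarrow$\,(ii) is immediate: the map $\Phi_{\alpha}$ of \eqref{isoF} is an isomorphism of $\BQ$-algebras $K_f \cong \BQ(\alpha)$, so it carries the subfields of $K_f$ bijectively onto the subfields of $\BQ(\alpha)$ and fixes $\BQ$ pointwise. Hence $K_f$ has no subfield strictly between $\BQ$ and $K_f$ if and only if $\BQ(\alpha)$ has none.

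Next I would establish (ii)\,$\Leftrightarrow$\,(iii) by the fundamental theorem of Galois theory applied to the Galois extension $\BQ(\RR_f)/\BQ$ with group $G$. Since $G_{\alpha}=\Gal(\BQ(\RR_f)/\BQ(\alpha))$ is exactly the subgroup attached to $\BQ(\alpha)$, the inclusion-reversing Galois correspondence restricts to a bijection between the intermediate fields $L$ with $\BQ \subseteq L \subseteq \BQ(\alpha)$ and the intermediate subgroups $H$ with $G_{\alpha} \subseteq H \subseteq G$. Under this bijection $\BQ \leftrightarrow G$ and $\BQ(\alpha) \leftrightarrow G_{\alpha}$. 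Therefore $\BQ(\alpha)$ has no subfield other than $\BQ$ and itself precisely when there is no subgroup strictly between $G_{\alpha}$ and $G$, i.e.\ precisely when $G_{\alpha}$ is a maximal subgroup of $G$.

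Finally I would prove (iii)\,$\Leftrightarrow$\,(iv) using the standard block theory of transitive permutation groups. Since $f(x)$ is irreducible, $G$ acts transitively on $\RR_f$, and for the fixed root $\alpha$ the assignment $H \mapsto H\cdot\alpha$ is a bijection between the subgroups $H$ with $G_{\alpha} \subseteq H \subseteq G$ and the blocks of the $G$-action containing $\alpha$ (its inverse sends a block $B \ni \alpha$ to its setwise stabilizer). Under this correspondence the singleton $\{\alpha\}$ corresponds to $G_{\alpha}$ and the full set $\RR_f$ to $G$. Thus $G$ is primitive, meaning the only blocks are singletons and $\RR_f$ itself, exactly when $G_{\alpha}$ is maximal, closing the cycle of equivalences.

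The only step requiring genuine care is the block-to-subgroup correspondence underlying (iii)\,$\Leftrightarrow$\,(iv): one must check that $H\cdot\alpha$ is indeed a block and that distinct intermediate subgroups yield distinct blocks and conversely. This is the content of the classical characterization of primitivity via maximality of point stabilizers, so the main work is to state it correctly for the transitive action of $\Gal(f)$ on the root set $\RR_f$ rather than to prove anything new; the Galois-theoretic and algebra-isomorphism steps are then routine.
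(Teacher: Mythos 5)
Your proposal is correct and takes essentially the same route as the paper: (i)$\Leftrightarrow$(ii) via the isomorphism $\Phi_{\alpha}$ of \eqref{isoF}, (ii)$\Leftrightarrow$(iii) via the Galois correspondence for $\BQ(\RR_f)/\BQ$ restricted to subgroups between $G_{\alpha}$ and $G$, and (iii)$\Leftrightarrow$(iv) via the standard characterization of primitivity by maximality of point stabilizers. The only difference is cosmetic: where you sketch the classical block--subgroup correspondence for the last equivalence, the paper simply cites \cite[Prop. 3.4 on p. 15]{Passman} (see Remark~\ref{permuatation}(1)).
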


\begin{remark}
\label{permuatation}
\begin{itemize}
\item[]
\item[(1)]
A transitive permutation group $G$ is {\sl primitive} if and only if the stabilizer of a point is a maximal subgroup of $G$
\cite[Prop. 3.4 on p. 15]{Passman}.
\item[(2)]
Every $2$-transitive permutation group is primitive
\cite[Prop. 3.8 on p. 18]{Passman}.
\end{itemize}
\end{remark}

\begin{proof}[Proof of Proposition \ref{intermediate}]
It follows from \eqref{isoF} that (i) and (ii) are equivalent. It follows from Remark \ref{permuatation}(1) that
(iii) and (iv) are equivalent.

Let us prove that (ii) and (iii) are equivalent. Let $H$ be a subgroup of $G$ that contains $G_{\alpha}$. Let us consider the subfield of $H$-invariants
$$F:=\BQ(\RR_f)^{H}=\{e \in \BQ(\RR_f)\mid \sigma(e)=e \ \forall \sigma \in H\}\subset \BQ(\RR_f).$$
Clearly $F$ is contained $\BQ(\RR_f)^{G_{\alpha}}=\BQ(\alpha)$.

There is the canonical  bijection (Galois correspondence) between the set of  subfields  $\BQ(\RR_f)$ and  the set of the subgroups of $G$  (see e.g., \cite[Chapter VI, Theorem 1.1]{Lang}).
  If $H$ is neither $G_{\alpha}$ nor $G$ (i.e., $G_{\alpha}$ is {\sl not} maximal)  then $F$ is neither $\BQ(\alpha)$
 nor $\BQ(\RR_f)^{G}=\BQ$.  This means if (iii) does not hold then (ii) does not hold as well.
 
 Conversely,  let $F$ be a field that lies strictly between $\BQ(\alpha)$ and $\BQ$. Then the Galois group
 $H:=\mathrm{Gal}(\BQ(\RR_f)/F)$ is a proper subgroup of $G$ that contains $G_{\alpha}$ but does {\sl not} coincide with it.
 Hence $G_{\alpha}$ is {\sl not} maximal. This means that if (ii) does not hold then (iii) does not hold as well.
 This ends the proof.
\end{proof}

\begin{example}
\label{Alt}
Suppose that $n\ge 4$.  Let $\mathrm{Alt}(\RR_f)$ be the only index two subgroup of $\mathrm{Perm}(\RR_f)$, which is 
isomorphic to the {\bf alternating group} $\mathbf{A}_n$.
Then both $\mathrm{Perm}(\RR_f)$ and $\mathrm{Alt}(\RR_f)$ are doubly transitive permutation groups  \cite{Passman} and therefore   are  primitive.
It follows from Proposition  \ref{intermediate} that if $\mathrm{Gal}(f)$  coincides with either $\mathrm{Perm}(\RR_f)$ or $\mathrm{Alt}(\RR_f)$
then $K_f$ does not contain a proper subfield except $\BQ$. In other words, $K_f$ does not contain a proper subfield except $\BQ$
if $\mathrm{Gal}(f)$ is isomorphic either to the full symmetric group $\mathbf{S}_{n}$ or to the alternating group $\mathbf{A}_{n}$. (The case of  $\mathbf{S}_{n}$ was discussed earlier in \cite[Sect. 3, p. 51]{LO}.)
\end{example}

\section{Proofs of main results}
\label{mainproof}

If $X$ is a complex torus then  its endomorphism algebra $\mathrm{End}^0(X)=\mathrm{End}(X)\otimes\BQ$
will be denoted also by 
 $D(X)$ in order to be consistent with the  notation   in \cite{OZ}. 
 
  In the following definition, {\bf smCM} is short for
 {\sl sufficiently many Complex Multiplications}: this terminology is inspired by 
  a similar notion for abelian varieties introduced by F. Oort \cite{Oort}.
 \begin{defn}  Let $T$ be a positive-dimensional complex torus and $E$ a number field of degree $2\dim(T)$. We say that $T$ is a {\bf smCM}-torus or a {\bf smCM}-torus with multiplication by $E$, if there is a $\BQ$-algebra embedding $E \hookrightarrow  D(T)$ that sends $1\in E$ to 
 the identity automorphism of $T$. 
 \end{defn}
 
 The following assertion is contained in \cite[Corollary 1.7 on p. 15]{OZ}.
 \begin{footnote}
{There is a typo in the assertion 2 of this Corollary. Namely, one should read  in the displayed formula 
$[D(S):\BQ]$ (not $[E:\BQ]$).}
\end{footnote}
 
 \begin{Proposition}
 \label{OZcor}
Let  $T$ be  a {\bf smCM}-torus  with multiplication by a number field $E$. 

 Then there are a simple complex torus $S$ and a positive integer $r$ such that 
\begin{enumerate}
\item  $r$ divides $2\dim(T);$\item  $T$ is isogenous to $S^r;$
\item \begin{equation}
\label{degreeOZ}
[D(S):\BQ]=2\dim(S);
\end{equation}
\item the field $E$ contains a subfield, $\mathrm{HDG}(T),$ that is isomorphic to $D(S)$,
and
\begin{equation}
\label{power}
r=\frac{2\dim(T)}{\dim_{\BQ}(\mathrm{HDG}(T))}.
\end{equation}
\end{enumerate}
 \end{Proposition}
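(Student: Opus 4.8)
The plan is to recast everything in terms of rational Hodge structures and to run the complex-torus analogue of the classical theory of CM abelian varieties, the key point being that the \textbf{smCM} hypothesis lets us bypass the failure of Poincar\'e reducibility for general complex tori. Write $H=H_1(T,\BQ)$, a rational Hodge structure of type $\{(-1,0),(0,-1)\}$ with $\dim_{\BQ}H=2\dim(T)$; under $T\mapsto H$ isogeny classes of complex tori correspond to such Hodge structures, simple tori to simple (irreducible) Hodge structures, and $D(T)=\End(H)$ computed in the category of Hodge structures. The embedding $E\hookrightarrow D(T)$ makes $H$ a module over the field $E$, and since $\dim_{\BQ}H=[E:\BQ]$ this module is free of rank $1$; in particular the only $E$-stable sub-Hodge-structures of $H$ are $\{0\}$ and $H$, and $E$ is its own centralizer in $\End_{\BQ}(H)$.

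First I would produce the simple factor. Choose a nonzero sub-Hodge-structure $W\subseteq H$ of least $\BQ$-dimension; being defined over $\BQ$ it has type exactly $\{(-1,0),(0,-1)\}$, so it is simple and corresponds, up to isogeny, to a simple subtorus $S$ with $W\cong H_1(S,\BQ)$, and $D(S)=\End(W)$ is a division algebra by Schur's lemma.

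The hard part --- the step that would fail for an arbitrary complex torus, since the category of rational Hodge structures is not semisimple in the absence of a polarization --- is to show that $H$ is \emph{isotypic}, i.e. $H\cong W^{\oplus r}$. Here the $E$-action does the work: for each $e\in E^{\times}$ the translate $eW$ is again a simple sub-Hodge-structure isomorphic to $W$, and $\sum_{e\in E}eW$ is $E$-stable, hence equal to $H$ by the rank-$1$ remark. A standard argument valid in any abelian category (pick a subfamily of the $eW$ maximal among those whose sum is direct; simplicity of each summand forces it to exhaust the whole sum) then upgrades $H=\sum_e eW$ to a direct-sum decomposition $H\cong W^{\oplus r}$. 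Translating back gives an isogeny $T\sim S^{r}$ with $r=\dim_{\BQ}H/\dim_{\BQ}W$.

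Finally I would pin down $D(S)$ by a centralizer computation. From $H\cong W^{\oplus r}$ one gets $D(T)\cong M_r(D(S))$, a central simple algebra over $F:=Z(D(S))$; since $F$ is central in $D(T)$ and $E$ is maximal commutative, $F\subseteq E$. Applying the double centralizer theorem to $E$ in $D(T)$, together with $C_{D(T)}(E)=E$, gives $[E:F]=r\,e$, where $e^{2}=\dim_{F}D(S)$. Counting degrees in $[E:\BQ]=[E:F][F:\BQ]$ yields $e\,[F:\BQ]=2\dim(S)$, while viewing $W$ as a free module over the division algebra $D(S)$ gives $2\dim(S)=\dim_{\BQ}W=m\,e^{2}[F:\BQ]$ for some integer $m\ge 1$. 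Comparing the two relations forces $m=e=1$, so $D(S)=F$ is a number field with $[D(S):\BQ]=2\dim(S)$, realized as the subfield $\mathrm{HDG}(T):=F\subseteq E$. Then $r=\dim(T)/\dim(S)=2\dim(T)/[\mathrm{HDG}(T):\BQ]$, which also shows $r\mid 2\dim(T)$, establishing all four assertions. I expect the isotypicity step to be the genuine obstacle; the centralizer bookkeeping, while it must be carried out carefully, is routine once the decomposition is in hand.
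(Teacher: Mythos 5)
Your proof is correct, and it is worth stressing that the paper itself gives no argument for Proposition \ref{OZcor}: the statement is simply quoted from Oort--Zarhin \cite[Corollary 1.7 on p. 15]{OZ} (with a footnote correcting a typo there), so your proposal supplies a self-contained proof where the paper offers only a citation. Your route --- translating to the rational Hodge structure $H=H_1(T,\BQ)$, observing that $H$ is a one-dimensional $E$-vector space so that $E$ is its own centralizer in $\End_{\BQ}(H)$ and admits no proper nonzero $E$-stable subspace, forcing isotypic semisimplicity via the translates $eW$ (the sum $\sum_{e} eW$ is $E$-stable, hence equals $H$, and the maximal-direct-subfamily argument yields $H\cong W^{\oplus r}$), then pinning down $D(S)$ by the double centralizer theorem --- is essentially the mechanism underlying the Oort--Zarhin theory, and the bookkeeping checks out: $C_{D(T)}(E)=E$ gives $F:=Z(D(S))\subseteq E$ and $[E:F]=re$ with $e^2=\dim_F D(S)$; combining $re[F:\BQ]=[E:\BQ]=2r\dim(S)$ with $2\dim(S)=\dim_{\BQ}W=me^{2}[F:\BQ]$ forces $me=1$, so $D(S)=F$ is a number field of degree $2\dim(S)$ embedded in $E$, and $r=2\dim(T)/[F:\BQ]$, which in particular divides $2\dim(T)$. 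You also correctly identify the conceptual point: for a general complex torus the category of relevant Hodge structures is not semisimple (no polarization, no Poincar\'e reducibility), and it is precisely the rank-one action of the degree-$2\dim(T)$ field that substitutes for it. Two cosmetic remarks only: the sum should run over $e\in E^{\times}$ (or one should note $0\cdot W=0$); and the dictionary ``complex tori up to isogeny $=$ rational Hodge structures of type $\{(-1,0),(0,-1)\}$, with $D(T)$ the Hodge-endomorphism algebra'' deserves a reference --- it is the content of \eqref{endT} in the paper, cf.\ \cite[Proposition 5.2.11]{Harder} or \cite{BL} --- but neither affects correctness.
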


  The next lemma is an almost immediate corollary of \propref{OZcor}.

\begin{Lemma}
\label{bigEnd}
Let $T$ be a {\bf smCM}-torus     with  multiplication by a field $E\subset D(T)$.   Suppose that at least one of the following conditions holds.

\begin{itemize}
\item[(i)]  $D(T)=E$.

\item[(ii)] 
$E$ has no proper subfields except $\BQ$.
\end{itemize}
Then  $T$ is a simple torus and $D(T)=E$.

\end{Lemma}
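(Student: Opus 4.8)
The plan is to apply \propref{OZcor} to $T$ and then use the hypotheses to force the integer $r$ to equal $1$. Recall that \propref{OZcor} produces a simple complex torus $S$ and a positive integer $r$ with $T$ isogenous to $S^r$, and it identifies a subfield $\mathrm{HDG}(T)\subseteq E$ that is isomorphic to $D(S)$, with $r = 2\dim(T)/\dim_{\BQ}(\mathrm{HDG}(T))$. Since $E$ is a number field of degree $2\dim(T)$ (this is built into the definition of a {\bf smCM}-torus with multiplication by $E$), proving $r=1$ is equivalent to proving $\mathrm{HDG}(T)=E$, i.e.\ that the subfield $\mathrm{HDG}(T)$ is all of $E$. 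Once $r=1$, the isogeny $T\sim S^r=S$ shows $T$ is simple, and then $D(T)\cong D(S)\cong \mathrm{HDG}(T)=E$; combined with the given embedding $E\subset D(T)$ and a dimension count this gives $D(T)=E$.

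The argument splits according to which hypothesis holds. First I would treat case (ii): if $E$ has no proper subfields except $\BQ$, then the subfield $\mathrm{HDG}(T)\subseteq E$ is either $\BQ$ or $E$. It cannot be $\BQ$, because $\dim_{\BQ}(\mathrm{HDG}(T)) = \dim_{\BQ}(D(S)) = 2\dim(S) \ge 2$ by \eqref{degreeOZ} (as $S$ is a positive-dimensional torus). Hence $\mathrm{HDG}(T)=E$, so $r=1$ and we are done by the paragraph above. For case (i), the hypothesis $D(T)=E$ is even more direct: since $\mathrm{HDG}(T)\cong D(S)$ and $T$ is isogenous to $S^r$, one has $D(T)=D(S^r)$, which is a matrix algebra $\mathrm{Mat}_r(D(S))$ over $D(S)$; but $D(T)=E$ is a field, which forces $r=1$ (a matrix algebra $\mathrm{Mat}_r$ over a division ring is commutative only when $r=1$), again giving the conclusion.

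The main obstacle is bookkeeping rather than depth: I must make sure the degree statement $\dim_{\BQ}(\mathrm{HDG}(T))=2\dim(S)\ge 2$ is applied correctly (this is exactly \eqref{degreeOZ}, and it rules out the degenerate possibility $\mathrm{HDG}(T)=\BQ$ in case (ii)), and I must justify that once $r=1$ the endomorphism algebra identification yields $D(T)=E$ on the nose rather than merely an isogeny-level isomorphism. For the latter, the cleanest route is: isogenous complex tori have isomorphic endomorphism algebras, so $D(T)\cong D(S)\cong \mathrm{HDG}(T)\subseteq E\subseteq D(T)$; since all of these are finite-dimensional over $\BQ$ of the same dimension $2\dim(T)$, every inclusion is an equality, whence $D(T)=E$. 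It is worth noting that in case (i) the identity $D(T)=E$ is assumed, so there the only content is simplicity of $T$, which follows immediately once $r=1$.
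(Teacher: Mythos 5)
Your proposal is correct and takes essentially the same route as the paper's own proof: both invoke \propref{OZcor}, force $r=1$ in case (i) from the fact that $D(T)\cong \mathrm{Mat}_r(D(S))$ cannot be a field when $r>1$, and in case (ii) use \eqref{degreeOZ} to exclude $\mathrm{HDG}(T)=\BQ$ before concluding $r=1$ via \eqref{power} and then $D(T)=E$ by the same dimension count identifying the $\BQ$-algebra $D(T)$ with its subfield $E$. The only cosmetic difference is that you rule out the degenerate subfield with $\dim_{\BQ}(D(S))=2\dim(S)\ge 2$ while the paper cites the evenness of $[D(S):\BQ]$ -- the same fact read off from \eqref{degreeOZ}.
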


\begin{proof}[Proof of Lemma \ref{bigEnd}]
By
Proposition \ref{OZcor},
there are a simple complex torus $S$ and a positive integer $r$ with properties (1-4) of \propref{OZcor}.

This implies that $D(T)$ is isomorphic to the matrix algebra $\mathrm{Mat}_r(D(S))$ of size $r$ over $D(S)$.
In particular, $D(T)$ is not a field if $r>1$.  This implies readily that in case (i) of Lemma \ref{bigEnd}
$r=1$ and therefore $T$ is isogenous to simple $S$
and therefore is simple itself; by assumption, $D(T)=E$.

Let us do the case (ii).  
The absence of intermediate subfields in $E$ implies that
either $D(S)=\BQ$ or $D(S) \cong E$. In light of \eqref{degreeOZ}, $[D(S):\BQ]$ is {\sl even},
which implies that $D(S)\cong E$ and, therefore,
\begin{equation}
\label{ratio}
\dim_{\BQ}(\mathrm{HDG}(T))=[E:\BQ]=[D(S):\BQ]=2g=2\dim(T).
\end{equation}
It follows that $\dim_{\BQ}(\mathrm{HDG}(T))=2\dim(T)$. Now \eqref{power} implies that $r=1$,
hence, $T$ is isogenous to    simple $S$ and, therefore, is a simple torus itself. In addition,
\begin{equation}
\label{endoField}
D(T) \cong D(S)\cong E.
\end{equation}
So, the $\BQ$-algebra $D(T)$ is isomorphic to its subfield $E$ and therefore coincides with $E$.\end{proof}

  \begin{Lemma}
   \label{simpleNotCM}
   Let $T$ be a simple complex torus  of positive dimension $g$ such that
  its endomorphism algebra  $D(T)$ is a degree $2g$ number field $E$ that is not CM.
      Then  $a(T)=0.$ 
      \end{Lemma}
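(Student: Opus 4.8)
The plan is to show that the algebraic dimension of a simple torus $T$ with $D(T)=E$ a non-CM field of degree $2g$ is zero by ruling out the existence of a nonzero line bundle, equivalently a nonzero polarization form, equivalently a nonzero element of $\mathrm{NS}(T)$. Recall that for a complex torus $T=V/\Gamma$, the N\'eron--Severi group $\mathrm{NS}(T)$ can be identified with the group of Hermitian forms $H$ on $V$ whose imaginary part takes integer values on $\Gamma$; equivalently, with alternating integer-valued forms $E_{\mathrm{form}}\colon \Gamma\times\Gamma\to\BZ$ satisfying the compatibility condition $E_{\mathrm{form}}(ix,iy)=E_{\mathrm{form}}(x,y)$. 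Since $a(T)=0$ is known to be equivalent to $\mathrm{NS}(T)$ containing no positive-definite (or even no nonzero semi-positive) Hermitian form of the right type, the cleanest route is to prove directly that for our $T$ the \emph{only} line bundles come from the trivial Hodge-theoretic data, and in fact to reduce the statement to a Rosati-type involution argument on $E$.

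Here is the mechanism I would use. The field $E$ acts on $V=E_{\BR}$ and on $\Gamma$ (up to finite index), so $E$ acts on the $\BQ$-vector space $\Gamma_{\BQ}=E$ itself. A nonzero alternating form defining a class in $\mathrm{NS}(T)\otimes\BQ$ produces, via the nondegenerate trace pairing on $E$, a Rosati-style positive involution: concretely, any $\BQ$-valued alternating form $\psi$ on $E$ that is $E$-compatible in the Hodge sense can be written as $\psi(x,y)=\mathrm{Tr}_{E/\BQ}(\xi\, x\,\ov y)$ for some $\xi\in E$, where $\ov{\phantom x}$ is the involution of $E$ induced by the complex structure $\Psi$. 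The key point is that this involution $x\mapsto \ov x$ is a nontrivial field automorphism of $E$ whose fixed field $E_0$ would then be a proper subfield with $[E:E_0]=2$, making $E$ a \emph{CM field} over its totally real (or at least fixed) subfield. First I would make precise that the complex structure $\Psi$ induces, via complex conjugation on $\BC^g$, an $\BR$-algebra automorphism of $E_{\BR}$ which restricts to a $\BQ$-algebra automorphism of $E$, and that the existence of a nonzero $E$-compatible Hermitian form forces this automorphism to be a nontrivial involution whose fixed field is totally real.

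The main obstacle, and the crux of the argument, is exactly this: to show that a nonzero element of $\mathrm{NS}(T)$ would manufacture a CM structure on $E$, contradicting the hypothesis that $E$ is not CM. Concretely, I would argue that the positivity/compatibility of the Hermitian form $H$ with $\Psi$ forces the associated involution on $E$ to coincide with an involution $\rho$ that is positive for the trace form and satisfies $\rho\neq\mathrm{id}$ (since $E$ is purely imaginary, so the complex structure is genuinely nontrivial); the fixed subfield $E^{\rho}$ is then a proper subfield over which $E$ is a totally imaginary quadratic extension, i.e.\ $E$ is CM. Since we assumed $E$ is not CM, no such nonzero form exists, hence $\mathrm{NS}(T)=\{0\}$. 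Finally, the standard fact that a complex torus with $\mathrm{NS}(T)=\{0\}$ (equivalently, admitting no nonconstant meromorphic functions beyond constants coming from the trivial line bundle) has $a(T)=0$ completes the proof; I would cite the relevant statement from \cite{BL} on algebraic dimension and the N\'eron--Severi group, and note that simplicity of $T$ guarantees there are no lower-dimensional quotients contributing meromorphic functions either.
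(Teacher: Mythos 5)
There is a genuine gap at the crux of your argument: the claim that a nonzero class in $\mathrm{NS}(T)$ forces the adjoint (``Rosati-style'') involution on $E$ to be positive with totally real fixed field, hence that $E$ is CM. Positivity of the Rosati involution is a theorem about \emph{polarizations}, i.e., line bundles whose Hermitian form $H$ is positive definite --- which is exactly the algebraic situation you are trying to rule out, so invoking it here is circular. On a non-algebraic torus a line bundle with $c_1(\mathcal{L})\ne 0$ may have an \emph{indefinite} nondegenerate Hermitian form (simplicity of $T$ makes $\phi_{\mathcal{L}}$ an isogeny, so $H$ is nondegenerate, but nothing makes it definite), and for such a form the adjoint involution $\rho$ of $E$ need not be complex conjugation under any embedding: its fixed field is a degree $g$ subfield that need not be totally real. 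So all your mechanism can extract from $\mathrm{NS}(T)\ne\{0\}$ is that $E$ carries a nontrivial involution, i.e., a subfield of degree $g$ --- and that does not contradict ``not CM''. Concretely, $E=\BQ(2^{1/3})(\sqrt{-1})$ is a purely imaginary non-CM sextic field with the involution $\rho$ fixing $\BQ(2^{1/3})$, and the forms $\psi(x,y)=\mathrm{Tr}_{E/\BQ}\bigl(\xi\, x\,\rho(y)\bigr)$ with $\rho(\xi)=-\xi$ are alternating and satisfy $\psi(Jx,Jy)=\psi(x,y)$ for any complex structure $J\in E_{\BR}$ with $\rho(J)=-J$ (such $J$ exist); hence ``$\mathrm{NS}(T)\ne\{0\}\Rightarrow E$ is CM'' is not a formal consequence of your setup. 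This is precisely why the paper, when it does prove $\mathrm{NS}(T)=\{0\}$ (Proposition \ref{PicardZero}), assumes the \emph{stronger} hypothesis that $E$ has no subfield of degree $g$, and even then must handle the case of the identity involution by a separate divisibility argument from \cite[Ch. 5, Prop. 1.2]{BL} rather than by positivity.

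The lemma itself the paper proves by a much shorter route, which your proposal only brushes against in its final sentence: every complex torus admits an algebraic reduction $T\to T_a$ onto an abelian variety with $\dim T_a=a(T)$ and connected kernel a subtorus \cite[Ch. 2, Sect. 6]{BL}; simplicity therefore forces either $a(T)=0$ or $T$ abelian; and if $T$ were abelian, Albert's classification \cite[Section 1, Application I]{Mum} says that a simple abelian variety of dimension $g$ whose endomorphism algebra is a field of the maximal degree $2g$ must have that field CM, contradicting the hypothesis. Note that this argument uses only ``not CM'' and never mentions $\mathrm{NS}(T)$. Your reduction ``$\mathrm{NS}(T)=\{0\}\Rightarrow a(T)=0$'' is itself fine (pull back an ample class from a positive-dimensional abelian quotient), but you are attempting to prove the strictly stronger statement $\mathrm{NS}(T)=\{0\}$ under a hypothesis too weak to yield it.
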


   \begin{proof}
  Every complex torus $T$ admits a maximal quotient abelian variety $T_a$ such that  $\dim T_a=a(T)$  (\cite[Ch. 2, Sect. 6]{BL}).
 The  (connected) kernel of the surjective  homomorphism $T\to  T_a$ is a (complex) subtorus of $T. $ Thus, if $T$ is simple,  either it is an abelian variety or $a(T)=0$. 
  Suppose $T$ is an abelian variety. Then Albert's classification of endomorphism algebras of simple complex abelian varieties \cite[Section 21, Application I]{Mum} implies that
  $E=D(T)$ has degree $[E:\BQ] \le 2g$; if the equality holds then $E$ is a CM field.
  Since $E$ has degree $2g$ but is not a CM field, we get a contradiction that proves that $a(T)=0$.
   \end{proof}
 
\begin{remark}
\label{nonCMii}
If   $T$ is a  smCM-torus with   multiplication by a field $E$,   $g=\dim(T)>1, $  and  condition (ii) of \lemref{bigEnd}  holds then  $E$ is not a CM field, 
because  a degree $2g$ CM field contains a (totally) real subfield of degree $g$. 
\end{remark}

\begin{proof}[Proof of Proposition \ref{PicardZero}]
\label{invE}
 We are given that
$E=D(T)$
is a number field of degree $2g, $   hence  $T$ is a smCM torus and condition (i) of \lemref{bigEnd}  holds.   Thus    $T$ is a {\sl simple} complex torus of positive dimension $g$.  
The absence of degree $g$ subfields   in $E$  implies that $E$ is {\sl not} a CM field (see Remark \ref{nonCMii}).
It follows from \lemref{simpleNotCM}  that $T$ has algebraic dimension $0$.

Suppose that $\mathrm{NS}(T) \ne \{0\}$. Then there exists a holomorphic line bundle $\mathcal{L}$ on $T$,
whose first Chern class $c_1(\mathcal{L})\ne 0$. Then $\mathcal{L}$ gives rise to a {\sl nonzero} morphism of complex tori
$$\phi_{\mathcal{L}}: T \to T^{\vee}$$
where the $g$-dimensional complex torus $T^{\vee}=\mathrm{Pic}^0(T)$ is the dual of $T$ (see \cite[Ch. 2, Sect. 3]{BL}).

 Since $T$ is simple and both $T$ and  $T^{\vee}$ have the same dimension $g$, the  {\sl nonzero} morphism $\phi_{\mathcal{L}}$ is an {\sl isogeny} of complex tori. This means that $T$ is a {\sl nondegenerate}
complex torus  \cite[Ch. 2, Prop. 3.1]{BL} in the terminology of \cite{BL}. Since $T$ is simple,   $\mathcal{L}$ is a ``polarization'' on $T$ (see \cite[Proposition 1.7, Ch. 2, Sect. 1]{BL}).

Let
$$\mathrm{End}^0(T) \to \mathrm{End}^0(T), \ u \mapsto u^{\prime}$$
be the {\sl Rosati involution} attached to $\mathcal{L}$ \cite[Ch. 2, Sect. 3]{BL}. If it is nontrivial then the subalgebra  of its invariants
is a degree $g$ subfield of the field $E=\mathrm{End}^0(T) $  (see \cite[Theorem 1.8, Chapter VI]{Lang}). 
However, by our assumption, such a subfield does not exist.
This implies that the Rosati involution is the identity map.  It follows from \cite[Ch. 5, Prop. 1.2, last assertion]{BL}
that $2g=[E:\BQ]$ divides $g$, which is nonsense.  The obtained contradiction implies that $c_1(\mathcal{L})$
is always $0$, i.e., $\mathrm{NS}(T) = \{0\}$.
\end{proof}

\begin{proof}[Proof of Proposition \ref{HomZero}]
Let us present complex torus $T$ as the quotient
$$T=V/\Gamma$$
where $V$ is a $g$-dimensional complex vector space and $\Gamma$ a discrete additive subgroup of rank $2g$.
Let 
$$\Gamma_{\BQ}:=\Gamma\otimes\BQ,  \ \Gamma_{\BR}:=\Gamma\otimes\BR$$
 be $2g$-dimensional $\BQ$- and $\BR$-vector spaces,  respectively.

 Note that 
 $V\cong \BC^g$ coincides with   $\Gamma_{\BR} $ endowed with complex structure. Namely, there is 
$$J \in \End_{\BR}(\Gamma_{\BR}),$$ which is multiplication by $\mathbf{i}=\sqrt{-1}$ in the $\BC$-vector space $V.$

 Moreover, 
 $$\End_{\BR}(V)=\End_{\BR}(\Gamma_{\BR}), \ \End_{\BC}(V)=\{u\in \End_{\BR}(\Gamma_{\BR}) \ \mid uJ=Ju\}.$$

We have
\begin{equation}
\label{JJ}
J^2=-1, \ J^{-1}=-J.
\end{equation}

It is known  (\cite[Proposition 5.2.11]{Harder})  
that $\End(T)\subset \End(\Gamma)$ and 
\begin{equation}
\label{endT}
 \End(T)\otimes_{\BZ}\BR=\End^0(T)\otimes_{\BQ}\BR=\{u\in  \End_{\BR}(\Gamma_{\BR})\mid uJ=Ju\}.
\end{equation}

In particular,  the $2g$-dimensional $\BQ$-vector space  $\Gamma_{\BQ}$  carries the natural structure of a faithful $\End^0(T)$-module.
Recall that $E=\End^0(T)$ is a number field of degree $2g$. Hence, $\Gamma_{\BQ}$ becomes the one-dimensional $E$-vector space
and therefore $E$ coincides with its own centralizer in $\End_{\BQ}(\Gamma_{\BQ})$.   This implies that if we put
$$E_{\BR}=E\otimes_{\BQ}\BR\subset \End_{\BQ}(\Gamma_{\BQ})\otimes_{\BQ}\BR=\End_{\BR}(\Gamma_{\BR})$$
then $\Gamma_{\BR}$ becomes the free $E_{\BR}$-module of rank $1$ and therefore
$E_{\BR}$ coincides with its own centralizer in $\End_{\BR}(\Gamma_{\BR})$. This implies that
$J \in E_{\BR}.$

\begin{Lemma}
\label{bilinear}
Let $B: \Gamma\times \Gamma \to \BZ$ be a $\BZ$-bilinear form. Let us extend it by $\BR$-linearity to
the $\BR$-bilinear form
$$\Gamma_{\BR} \times \Gamma_{\BR}  \to \BR,$$
which we continue to denote by $B$. Suppose that
\begin{equation}
\label{Jinv}
B(Jv_1,Jv_2)=B(v_1,v_2) \ \forall v_1,v_2\in \Gamma_{\BR} .
\end{equation}
Then $B \equiv 0$.
\end{Lemma}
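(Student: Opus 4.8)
The plan is to complexify and diagonalize the $E$-action, turning the two hypotheses on $B$ — its $J$-invariance and the fact that it is defined over $\BQ$ — into a combinatorial statement about the Galois action on the embeddings of $E$, which is then settled using the primitivity furnished by \propref{intermediate}.

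First I would extend $B$ to a $\BC$-bilinear form $B_{\BC}$ on $\Gamma_{\BC}:=\Gamma_{\BQ}\otimes_{\BQ}\BC$. Since $\Gamma_{\BQ}$ is a one-dimensional $E$-vector space, $\Gamma_{\BC}$ is free of rank one over $E\otimes_{\BQ}\BC=\prod_{\sigma}\BC$, the product running over the set $\Omega$ of the $2g$ embeddings $\sigma\colon E\hookrightarrow\BC$; thus $\Gamma_{\BC}=\bigoplus_{\sigma\in\Omega}\Gamma_{\sigma}$, where $E$ acts on the line $\Gamma_{\sigma}$ through $\sigma$, and I fix a generator $w_{\sigma}$ of each $\Gamma_{\sigma}$, chosen compatibly with the Galois action. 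As $J\in E_{\BR}$ and $J^{2}=-1$, each $\sigma(J)=\pm\mathbf{i}$, so I may define a type function $\chi\colon\Omega\to\{\pm1\}$ by $\sigma(J)=\chi(\sigma)\mathbf{i}$. Because $J\in E_{\BR}$, complex conjugation sends $\sigma$ to $\bar\sigma$ with $\bar\sigma(J)=\overline{\sigma(J)}$, whence $\chi(\bar\sigma)=-\chi(\sigma)$; since $E$ is purely imaginary, conjugation is fixed-point-free on $\Omega$, so $P:=\chi^{-1}(1)$ is a transversal for conjugation and $\chi$ takes both values. Evaluating $B_{\BC}(Ju,Jv)=B_{\BC}(u,v)$ on $u=w_{\sigma}$, $v=w_{\rho}$ gives $\bigl(\sigma(J)\rho(J)-1\bigr)c_{\sigma\rho}=0$ for the coefficients $c_{\sigma\rho}:=B_{\BC}(w_{\sigma},w_{\rho})$, i.e. $c_{\sigma\rho}=0$ unless $\chi(\sigma)\neq\chi(\rho)$.

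Next I would record rationality. Since $B$ is defined over $\BQ$, the matrix $(c_{\sigma\rho})$ is Galois-equivariant: writing $G=\Gal(f)$ for the Galois group, which acts transitively on $\Omega$, one has $c_{\gamma\sigma,\gamma\rho}=\gamma(c_{\sigma\rho})$ for all $\gamma\in G$. Consequently the support of $(c_{\sigma\rho})$ is a union of $G$-orbits on $\Omega\times\Omega$, and a nonzero $B$ forces some $G$-orbit to lie entirely in the ``opposite-type'' region $A=\{(\sigma,\rho):\chi(\sigma)\neq\chi(\rho)\}$; equivalently, there is a pair $(\sigma_{0},\rho_{0})$ with $\chi(\gamma\sigma_{0})\neq\chi(\gamma\rho_{0})$ for every $\gamma\in G$. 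The goal thus becomes to rule out such an ``always-opposite'' pair.

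Finally, here is where the hypothesis enters, and this is the step I expect to be the main obstacle. Assume an always-opposite pair exists. The relation ``$\sigma$ and $\rho$ have the same type under every $\gamma$'' is a $G$-invariant equivalence relation on $\Omega$; since $\chi$ is non-constant it is not the full relation, and since $E$ has no proper subfields except $\BQ$, \propref{intermediate} shows that $G$ acts primitively, so this congruence must be equality. Therefore the always-opposite partner of each $\sigma$ is unique, and by transitivity of $G$ every $\sigma$ has exactly one; this produces a $G$-equivariant fixed-point-free involution $\iota$ on $\Omega$. Its orbits then form a $G$-invariant partition of $\Omega$ into blocks of size $2$, with $1<2<2g$ because $g\ge 2$ — a nontrivial block system, contradicting primitivity. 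Hence no always-opposite pair exists, every Galois-equivariant $J$-invariant matrix $(c_{\sigma\rho})$ vanishes, and $B\equiv 0$. The delicate points to get right are the compatibility between the analytic datum $J$ (entering through $\chi$ and complex conjugation) and the arithmetic Galois action, and the verification that $\iota$ yields a genuinely \emph{nontrivial} block system, which is exactly where the hypotheses $g\ge 2$ and ``no proper subfields'' are used.
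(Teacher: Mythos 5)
Your proof is correct, but it takes a genuinely different route from the paper's. The paper never complexifies: from \eqref{Jinv} and $J^{-1}=-J$ it deduces that $J$ is \emph{anti}-self-adjoint for $B$, so the subspace $E_{\BR}^{-}$ of anti-self-adjoint elements of $E_{\BR}$ is nonzero; rationality of $B$ on $\Gamma_{\BQ}$ descends this to a nonzero $\BQ$-subspace $E^{-}\subset E$; for a nonzero $u_{-}\in E^{-}$ the square $u_{+}=u_{-}^{2}$ is self-adjoint and lies outside $\BQ$ (otherwise $\BQ+\BQ\cdot u_{-}$ would be a quadratic subfield of $E$, impossible since $[E:\BQ]=2g>2$ and $E$ has no proper subfields), and since the self-adjoint elements of $E$ form a subfield containing $u_{+}$, that subfield is all of $E$; hence $J$ is simultaneously self- and anti-self-adjoint, giving $B(Jv_{1},v_{2})\equiv 0$ and, as $J$ is invertible, $B\equiv 0$. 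So the paper works entirely over $\BR$ with elementary adjoint calculus, applying the no-proper-subfield hypothesis directly to $E$, whereas you translate that hypothesis through \propref{intermediate} into primitivity of $\Gal(f)$ and argue combinatorially on the $2g$ embeddings. Your route needs more setup, and one detail should be made precise: to have a Galois action at all you must work over $\bar{\BQ}$ (or the splitting field) rather than $\BC$, which is harmless since each $\sigma(J)=\pm\mathbf{i}$ is algebraic, and the ``compatibly chosen'' generators do exist — take $w_{\sigma}=e_{\sigma}w$ for a fixed $E$-generator $w\in\Gamma_{\BQ}$ and the idempotents $e_{\sigma}\in E\otimes\bar{\BQ}$, which satisfy $\gamma(e_{\sigma})=e_{\gamma\sigma}$. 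In exchange your argument yields a sharper structural picture: a nonzero rational $J$-invariant form exists exactly when the embeddings admit a $G$-equivariant fixed-point-free matching into opposite-$\chi$ pairs, i.e.\ a block system with blocks of size $2$ — precisely the CM configuration (index-two totally real subfield) that makes Riemann forms exist for abelian varieties of CM type, and exactly what primitivity excludes. Your two invocations of primitivity (forcing the congruence $\sim$ to be equality, then ruling out the size-$2$ block system, where $1<2<2g$ uses $g\ge 2$) are both sound, as is the reduction of \eqref{Jinv} to the support condition $c_{\sigma\rho}=0$ unless $\chi(\sigma)\ne\chi(\rho)$.
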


\begin{proof}[Proof of Lemma \ref{bilinear}]
Clearly, 
\begin{equation}
\label{BQ}
B(\Gamma_{\BQ},\Gamma_{\BQ})\subset \BQ.
\end{equation}
The $J$-invariance of $B$ means that
\begin{equation}
\label{Jminus}
B(Jv_1,v_2)=B(v_1, J^{-1}v_2)=-B(v_1,Jv_2) \   \forall v_1,v_2\in \Gamma_{\BR}
\end{equation}
because $J^{-1}=-J$ (since $J^2=-1$). It follows that the $\BR$-vector subspace
$$E_{\BR}^{-}=\{u\in  E_{\BR} \mid  B(u(v_1),v_2)=-B(v_1,u(v_2)) \   \forall v_1,v_2\in \Gamma_{\BR}\}$$
of $E_{\BR}$ is {\sl not} zero. In light of \eqref{BQ}, there is a nonzero  $\BQ$-vector subspace $E^{-}$ of $E$ such that
$$E_{\BR}^{-}=E^{-}\otimes_{\BQ}\BR.$$
Clearly, $E^{-}=E_{\BR}^{-}\bigcap E$ and
$$E^{-}=\{u\in  E\mid  B(u(v_1),v_2)=-B(v_1,u(v_2)) \   \forall v_1,v_2\in \Gamma_{\BQ}.\}$$
Let $u_{-}$ be a nonzero element of $E^{-}$. Clearly, $$u_{-}\not\in\BQ\subset E.$$ On the other hand,
$$u_{+}:=u_{-}^2\in E$$ also does {\sl not} lie in $\BQ$, because otherwise $\BQ+\BQ\cdot u_{-}$ is a quadratic subfield of $E$, which does {\sl not} contain quadratic subfields.
(Recall that $[E:\BQ]=2g>2$.)  Notice that
$$B(u_{+}(v_1),v_2)=B(v_1,u_{+}(v_2)) \   \forall v_1,v_2\in \Gamma_{\BQ}.$$
Let us consider
$$E^{+}=\{u\in  E_{\BR} \mid  B(u(v_1),v_2)=B(v_1,u(v_2)) \   \forall v_1,v_2\in \Gamma_{\BR}\}.$$
Clearly, $E^{+}$ is a subfield of $E$ that contains $u_{+}$ and therefore does {\sl not} coincide with $\BQ$.
This implies that $E^{+}=E$. It follows that for all $u\in E_{\BR}$
$$B(u(v_1),v_2)=B(v_1,u(v_2)) \   \forall v_1,v_2\in \Gamma_{\BR}.$$
Since $J \in  E_{\BR}$, it follows from \eqref{Jminus} that
$$B(Jv_1,v_2)=0 \   \forall v_1,v_2\in \Gamma_{\BR}.$$
Since $J$ is an automorphism of $\Gamma_{\BR}$, we get
$B \equiv 0.$
\end{proof}

We continue to prove Proposition \ref{HomZero}. Let us recall a description of the dual complex torus $T^{\vee}$ of $T$ (\cite[Ch. 1, Sect. 4]{BL},
\cite[Sect. 1.4]{Kempf}). Namely,  $T^{\vee}=V^{\vee}/\Gamma^{\vee}$ where $V^{\vee}$ is the complex vector space of all $\BC$-antilinear maps
$l: V \to \BC$ and
$$\Gamma^{\vee}=\{l \in V^{\vee}\mid \mathrm{Im}\left(l(\Gamma)\right) \subset \BZ\}.$$
The structure of a complex vector space on $V^{\vee}$ is defined by the operator 
$J^{\vee}\in \End_{\BR}(V^{\vee})$  such that $J^{\vee}(l)=il, $ i.e. $J^{\vee}(l)(v)=il(v).$
  By construction, 

\begin{equation}
\label{anti}
J^{\vee}(l)=-l\circ J \ \forall l \in V^{\vee}
\end{equation}
(recall that $l$ is {\sl antilinear}).

Let $f: T \to T^{\vee}$ be a morphism of complex tori (viewed as complex Lie group). Then (see \cite[Ch. 1, Sect. 1, p. 4]{BL} and \cite[Sect. 3.3]{OSVZ}) there exists  (a lifting of $f$, i.e.,) a $\BC$-linear map
$F: V \to V^{\vee}$ such that $F(\Gamma)\subset \Gamma^{\vee}$ and
$$f(v+\Gamma)=F(v)+\Gamma^{\vee}\in V^{\vee}/\Gamma^{\vee}=T^{\vee}   \ \forall v+\Gamma\in V/\Gamma=T.$$
Let us consider the sesquilinear form
$$H: V \times V \to \BC,  v_1, v_2 \mapsto F(v_1)(v_2)$$
and its imaginary part (which is a $\BR$-bilinear form)
$$B=\mathrm{Im}(H):  V \times V \to \BR, \ v_1, v_2 \mapsto \mathrm{Im}\left((F(v_1)(v_2)\right).$$
Clearly,
$$B(\Gamma,\Gamma)\subset \BZ, \  H(Jv_1,Jv_2)=H(v_1,v_2) \ \forall v_1,v_2 \in V=\Gamma_{\BR}.$$
This implies that 
 $$B(Jv_1,Jv_2)=B(v_1,v_2) \ \forall v_1,v_2 \in V=\Gamma_{\BR}.$$
 By Lemma \ref{bilinear}, $B \equiv 0$.  
 This implies that $H\equiv 0$ (see \bibitem[Lemma 2.1.7]{CAV})
and therefore $F\equiv0$. It follows that $f=0$, which ends the proof.
\end{proof}

\begin{proof}[Proof of Proposition \ref{AutE}]
Clearly,  $\End(T)$ is an order in the purely imaginary number field   $E=\End(T)\otimes\BQ$
of degree $2s$; its group of invertible elements (units) $\End(T)^{*}$ coincides with $\Aut(T).$ It is also clear that the roots of unity in $\End(T)$ are precisely  $1$ and $-1$. Now the desired result 
follows from Dirichlet's theorem about units \cite[Ch. II, Sect. 4, Th. 5]{BS}.
\end{proof}

\begin{proof}[Proof of Theorem \ref{mainP}]
We keep the notation of \thmref{mainP}.
Let us put 
$T:=T_{E,\Psi,\Lambda}$
and consider
\begin{equation}
O:=\{u \in E\mid u\cdot \Lambda\subset \Lambda\}\subset E.
\end{equation}
Then $O$ is an {\sl order} in $E$ \cite[Ch. VII, Sect. 2, Th. 3]{BS}.
Multiplications by elements of $O$ in $E_{\BR}$ give rise to the ring embedding
\begin{equation}
\label{Zembed}
O \hookrightarrow \mathrm{End}(T), 
\end{equation}
which extends by $\BQ$-linearity to the $\BQ$-algebra embedding
\begin{equation}
\label{Qembed}
 E=O\otimes\BQ \hookrightarrow \mathrm{End}(T)\otimes\BQ = D(T). 
\end{equation}
This  allows us to view $E$ as a certain $\BQ$-subalgebra of $D(T).$ Note  that $ 1\in E$ is mapped to $1 \in D(T).$
Recall that
$$[E:\BQ]=2g=2\dim(T).$$
Appying Lemma \ref{bigEnd}, we conclude that $T$ is simple and $D(T)=E$.

Recall that $\dim(T)\ge 2$. Applying \lemref{simpleNotCM} to $T$ and taking into account  \remarkref{nonCMii},
we obtain that the algebraic dimension of $T$ is $0$. It follows from already proven Propositions \ref{PicardZero}  and  \ref{HomZero} that
$\mathrm{NS}(T)=\{0\}$ and $\Hom(T,T^{\vee})=\{0\}$.

In order to prove assertion (d), notice that $E$ does not contain any roots of unity except $\{1,-1\}$. Indeed, if this is not the case
then either $E$ contains  $\sqrt{-1}$ or a primitive $p$th root of unity $\zeta$ where $p$ is an odd prime.
In all these cases $E$ contains a quadratic subfield  that is either $\BQ(\sqrt{-1})$ or $\BQ(\sqrt{-p})$ (if $p$ is congruent to $3\bmod 4$)
or $\BQ(\sqrt{p})$ (if $p$ is congruent to $1\bmod 4$).  Since $E$ does not contain a quadratic subfield, it does not contain 
any roots of unity except $\{1,-1\}$. Now the assertion (d) follows readily from Proposition \ref{AutE}.

\end{proof}

\begin{Example}
\label{basicE}
Let $T=V/\Gamma$ be a complex torus of dimension $g \ge 2$ where $V$ is a $g$-dimensional complex vector space and $\Gamma$ is a discrete lattice of rank $2g$ in $V$.
Let $\phi_T$ be a holomorphic endomorphism of the complex Lie group $T$ that enjoys the following property. 

Let $\phi_{\Gamma}$ is the endomorphism of $\Gamma$ induced by $\phi_T$,
and $f(x) \in \BZ[x]$ the  characteristic polynomial of $\phi_{\Gamma}$ (which is monic of degree $2g$).  Then $f(x)$ is separable, has no real roots and its Galois group $\Gal(f)$ over $\BQ$ is 
a {\sl transitive primitive} subgroup of
the full symmetric group $\mathbf{S}_{2g}$.

Let $E$ be the $\BQ$-subalgebra of $\End^0(T)$ generated by $\phi_T$. The conditions on $f(x)$ and $\Gal(f)$ imply that $f(x)$ is irreducible and $E\cong \BQ[x]/f(x)\BQ[x]$ is a  purely imaginary number field of degree $2g$.  In light of Proposition \ref{intermediate}, the condition on $\Gal(f)$ implies
 that $E$ has no proper subfields except $\BQ.$  Applying Theorem \ref{mainP}, we conclude that $T$ is a {\sl special torus}  and
$\End^0(T)=E$.
\end{Example}

 \section{Poor tori }
 \label{poortori1}

 \begin{Definition}[See \cite{BZ20}] 
 \label{poor}
 We say that a compact connected complex        manifold $Y$ of positive dimension is {\sl  poor} if it enjoys the following properties.
\begin{itemize}
\item
The algebraic dimension $a(Y)$ of $Y$ is $0$.
\item
$Y$ does not contain analytic subspaces of codimension 1.
\item
$Y$   contains  no  rational curve, i.e.,   the   image of a non-constant holomorphic
  map $\BP^1\to Y.$  (In other words, every holomorphic map $\BP^1 \to Y$ is constant.)
\end{itemize}
\end{Definition}

  Let   $Y$ be a poor manifold.   Obviously, $\dim(Y)\ge 2.$  For a surface, {\it poor} means the absence of  any curve $  C\subset Y.$   Explicit examples of $K3$ surfaces having this property may be found in \cite{ McM}   and in \cite[Proposition  3.6, Chapter VIII]{BHPV}).
Explicit examples of complex 2-dimensional tori $Y$ with $a(Y)=0$  are given in \cite[ Example 7.4] {BL}. 
 It  is proven in   \cite[Theorem 1.2]{CDV}  that if a compact   K\"{a}hler  3-dimensional manifold
has no closed subvarieties of dimension 1 or 2 then it is a complex torus. 
 
  On the other hand, a  complex torus  $T$ with  $\dim(T)\ge 2 $ and  $a(T)=0$ is a { poor}  K\"{a}hler manifold.  Indeed, a complex torus  $T$  is a   K\"{a}hler   manifold that does not contain rational curves.
 If $a(T)=0, $ then $T$ contains no analytic subsets of codimension 1 \cite[Corollary 6.4,   Chapter 2]{BL}.   Thus a  complex torus  $T$ is poor if and only if $a(T)=0.$ 
 
 We will use the  following properties of poor manifolds.

   \begin{Lemma}\label{poorprop}
 Let $X,Y$ be two complex compact connected manifolds and let $f:X\to Y$  be a surjective holomorphic map.  Assume that $Y$ is poor.  Then \begin{enumerate}\item if $ F_y:=f^{-1}(y)$  is finite for every $y\in Y$  then $X$ is poor;\item if $F_y:=f^{-1}(y)$ is 
 a poor manifold with $\dim(F_y)=\dim(X)-\dim(Y)$  for every $y\in Y$,  then $X$ is poor.\end{enumerate} In particular, the direct product of poor manifolds is a poor manifold.\end{Lemma}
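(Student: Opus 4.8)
The plan is to prove each of the three assertions of \lemref{poorprop} by verifying the three defining properties of poorness (\defnref{poor}) for $X$: algebraic dimension $0$, no analytic subspace of codimension $1$, and no rational curves. The crucial observation is that the last two properties pull back well along $f$, while the algebraic dimension condition is the most delicate one. I would treat the rational-curve and codimension-$1$ conditions first, since they are essentially formal consequences of surjectivity and the hypotheses on the fibers, and reserve the algebraic-dimension estimate for last, as that is where the genuine content lies.

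First I would handle the rational curves. If $C \subset X$ is a rational curve, image of a non-constant holomorphic map $h: \BP^1 \to X$, then $f \circ h : \BP^1 \to Y$ is holomorphic; since $Y$ is poor it must be constant, so $h(\BP^1)$ lies in a single fiber $F_y$. In case (1) the fiber is finite, forcing $h$ constant, a contradiction; in case (2) the fiber $F_y$ is itself poor and hence contains no rational curve, again a contradiction. Thus $X$ has no rational curve in either case. Next I would rule out codimension-$1$ analytic subspaces. The natural approach is to argue via algebraic dimension: it is a standard fact (\cite[Corollary 6.4, Chapter 2]{BL} in the torus case, but more generally for compact complex manifolds) that a compact complex manifold containing a codimension-$1$ analytic subspace has positive algebraic dimension. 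So the cleanest route is to establish $a(X)=0$ first and deduce the absence of codimension-$1$ subsets as a consequence, thereby collapsing two of the three verifications into one.

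The heart of the proof is therefore showing $a(X)=0$. The key input is the behavior of algebraic dimension under a surjective holomorphic map $f: X \to Y$ whose fibers are ``small'' in the relevant sense. In case (1) the map is finite, and a finite surjective morphism onto a manifold of algebraic dimension $0$ forces $a(X)=0$, since meromorphic functions on $X$ would (after taking symmetric functions along the fibers, or via the fact that $f$ is generically finite) produce meromorphic functions on $Y$ contradicting $a(Y)=0$; more precisely, $a(X) \le a(Y) + \dim X - \dim Y = a(Y) = 0$ when the generic fiber has algebraic dimension $0$, and here the fibers are $0$-dimensional. In case (2) one invokes the general inequality
\begin{equation}
\label{algdimineq}
a(X) \le a(Y) + a(F_y)
\end{equation}
for a generic fiber $F_y$ (an additivity/subadditivity estimate for algebraic dimension along a fibration), so that $a(Y)=0$ together with $a(F_y)=0$ (the fibers being poor) yields $a(X)=0$. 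In both cases one concludes $a(X)=0$, hence $X$ has no codimension-$1$ analytic subset and no rational curve by the arguments above, so $X$ is poor.

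The main obstacle I anticipate is making the algebraic-dimension inequality \eqref{algdimineq} precise and correctly attributing it: algebraic dimension is not perfectly additive in fibrations without hypotheses, and one must be careful that the fibration has equidimensional fibers of the stated dimension $\dim(X)-\dim(Y)$, which is exactly why that dimension hypothesis appears in case (2). The honest version of the argument likely proceeds by contradiction: if $a(X)>0$, the algebraic reduction of $X$ would be a positive-dimensional variety, and one would push meromorphic functions down to $Y$ or restrict them to a generic fiber to contradict either $a(Y)=0$ or $a(F_y)=0$. Finally, the ``in particular'' statement about products follows by taking $Y$ one of the factors, $X$ the product, and $f$ the projection, whose fibers are copies of the other factor, reducing the product case to case (2); an induction then handles finitely many factors.
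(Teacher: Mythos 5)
Your treatment of rational curves is correct and coincides with the paper's argument (compose with $f$, conclude the curve lies in a fiber, then use finiteness resp.\ poorness of the fiber). The genuine gap is in your handling of codimension-one subsets: the ``standard fact'' you invoke --- that a compact complex manifold containing a codimension-one analytic subspace has positive algebraic dimension --- is \emph{false} in general. The cited \cite[Ch.~2, Cor.~6.4]{BL} is a theorem specifically about complex tori. For arbitrary compact manifolds the implication fails: blow up a point on a $2$-dimensional torus $T$ with $a(T)=0$; the resulting manifold contains a divisor (the exceptional one, which is even a rational curve), yet its algebraic dimension is still $0$, since $a$ is a bimeromorphic invariant. (K3 surfaces with $a=0$ containing $(-2)$-curves give examples that are not even bimeromorphic to divisor-free manifolds.) What is true in general is the \emph{converse} implication: if $X$ contains no codimension-one analytic subset, then $a(X)=0$, because the polar set of a non-constant meromorphic function on a compact connected manifold is a nonempty analytic set of pure codimension one. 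So your plan of ``collapsing two verifications into one'' by proving $a(X)=0$ first and deducing the divisor condition from it runs the logic backwards and cannot be repaired as stated; the divisor condition must be established directly, and then $a(X)=0$ comes for free.

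This is exactly how the paper proceeds. In case (2): if $D\subset X$ is an irreducible analytic subset of codimension one with $D\cap F_y\ne\emptyset$, then $D\supset F_y$ --- otherwise $D\cap F_y$ would be a codimension-one analytic subset of the \emph{poor} fiber $F_y$; hence by Remmert's proper mapping theorem \cite{Re} the image $f(D)$ is analytic in $Y$ with $\dim f(D)=\dim D-\dim F_y=\dim Y-1$, contradicting poorness of $Y$. Note that this is precisely where the hypothesis $\dim F_y=\dim X-\dim Y$ is used, a hypothesis your write-up acknowledges but never actually deploys. In case (1), the paper argues that the image of the ramification locus is either empty or of pure codimension one in $Y$, hence empty since $Y$ is poor, so $f$ is an unramified cover and \cite[Lemma 3.1]{BZ20} applies. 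A secondary issue: your inequality $a(X)\le a(Y)+a(F_y)$ is not a standard theorem; Ueno's inequality \cite[Th.~3.8]{Ueno} gives only $a(X)\le a(Y)+\dim X-\dim Y$, which in case (2) bounds $a(X)$ by $\dim F_y$ and is useless for your purpose. Your fallback (restrict a meromorphic function to a generic fiber, where it is constant since $a(F_y)=0$, then descend along the connected fibers) can be made rigorous, but it becomes unnecessary once divisors are excluded directly, since $a(X)=0$ is then automatic --- which is why the paper relegates the Ueno reference to a remark.
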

 \begin{proof}  For proving (1), let us note that $f$ is  an unramified cover of $Y.$  Indeed,  the image $R$  under $f$ of the ramification locus  is either empty or  has pure codimension 1  in $Y$ (\cite[Section 1.1]{DG}, \cite[Theorem1.6]{Pe}, \cite{Re}). Since $Y$ is poor, $R$ is empty.  Now statement (1)  follows from   \cite[Lemma 3.1]{BZ20}. 
 
 Let us prove (2). Assume that $C\subset X$ is a rational curve. If  $C\subset F_y$ for some $y\in Y$ then  $ F_y$  is not poor, which is not the case.   Thus $f(C)$ is a rational curve in $Y$ which is also impossible, since $Y$ is poor. Assume that $D\subset Y$
 is  an analytic irreducible subspace of codimension 1  and $y\in Y.$ If $D\cap F_y\ne\emptyset,$ then $D\supset  F_y, $ since otherwise    $D\cap F_y$ would have codimension one    in $F_y.$    Thus $f(D)$ is an analytic subspace of $Y$  (\cite{Re}, \cite [Theorem 2, Chapter  VII]{Nar}) and $$\dim (f(D))=\dim (D)-\dim (F_y)=\dim(Y)-1,$$  which is impossible since $Y$ is poor. 
 Thus, $X$ is poor: it contains neither rational curves nor analytic  subspaces of codimension 1.\end{proof}
 
\begin{remark}  Let $X$ be as  in Lemma \ref{poorprop}. The fact that  $a(X)=0$ follows also from  \cite[Theorem 3.8]{Ueno}).\end{remark}


\begin{proof}[Proof of Theorem \ref{mainCM}] 
Notice that $T$ is a  smCM-torus  with  multiplication by $E$. Thanks to Proposition \ref{OZcor},
 $T$  is isogenous to $S^r$ where $S$ is a simple torus
such that its endomorphism algebra $D(S)$ is isomorphic to a subfield of $E$.  Hence, $D(S)$ is not a CM field.
Applying Lemma \ref{simpleNotCM}, we conclude that the algebraic dimension of $S$ is $0$.  According to \lemref{poorprop}, this implies that $a(T)=0$ as well.

\end{proof}

\section{Explicit examples}\label{explicitexamples}
Our goal is to describe an explicit construction of special complex tori $T$
in all complex dimensions $g \ge 2$.  
 In order to apply \thmref{mainP} and \propref{intermediate},  let us find a degree $2g$  irreducible polynomial   $f(x)\in\BQ[x]$
such that \begin{itemize}\item $f(x)$ has no real roots;\item $\mathrm{Gal}(f)$ is {\sl primitive}. \end{itemize}

Suppose that we are given such a $f(x)$ (see this section below).  Then the quotient
$E=\BQ[x]/f(x)\BQ[x]$ is a degree $2g$ purely imaginary field that does not contain proper subfields except $\BQ$.
We write $\tilde{x}$ for the image of $x$ in $E$. Then
$\{1, \tilde{x}, \tilde{x}^2, \dots, \tilde{x}^{2g-1}\}$ is a basis of  the $\BQ$-vector space $E$ of dimension $2g$. It follows that
$$\Lambda:=\BZ \cdot 1+ \BZ \cdot \tilde{x} +\dots + \BZ \cdot  \tilde{x}^{2g-1}\subset E$$
is  a free $\BZ$-module of rank $2g$  with the basis
$$\{1, \tilde{x}, \tilde{x}^2, \dots, \tilde{x}^{2g-1}\}.$$

Let $\alpha_1, \dots, \alpha_g \in \BC$ be all the roots of $f(x)$ with positive imaginary part. Then
$$\{\alpha_1, \bar{\alpha}_1, \dots , \alpha_g, \bar{\alpha}_g\}$$
is the set of all complex roots of $f(x)$.  Let
$$\tau_j: E=\BQ[x]/f(x)\BQ[x] \hookrightarrow \BC,  \ u(x)+f(x)\BQ[x] \mapsto u(\alpha_j)$$
be the $\BQ$-algebra homomorphism that sends  $\tilde{x} \in E$ to $\alpha_j$  ($1 \le j \le g$).
As in the beginning of Section \ref{poortori}, the direct sum of all $\tau_j$ defines an injective $\BQ$-algebra homomorphism
$$\Phi: E \hookrightarrow \BC^g,   \beta \mapsto (\tau_1(\beta), \dots \tau_g(\beta)),$$ which extends to the isomorphism 
$\Phi: E_{\BR} \cong \BC^g$ of $\BR$-algebras.  If $\beta \in E \subset E_{\BR}$ then
$$\Phi(\beta)=(\tau_1(\beta), \dots \tau_g(\beta)) \in \BC^g.$$
In particular,
$$\Phi(1)=(1, \dots, 1), \Phi(\tilde{x})=(\alpha_1, \dots, \alpha_g)$$
and therefore 
$$\Phi(\tilde{x}^k)=(\alpha_1^k, \dots, \alpha_g^k)$$
for all nonnnegative integers $k$. This implies that the $2g$-element set 
$$(1, \dots, 1), (\alpha_1, \dots, \alpha_g), (\alpha_1^2, \dots, \alpha_g^2), \dots, (\alpha_1^{2g-1}, \dots, \alpha_g^{2g-1})$$
is a basis of the lattice $\Phi(\Lambda) \subset \BC^g$.   

Let us consider the $g$-dimensional complex torus
$$T(f):=\BC^g/\Phi(\Lambda).$$
It follows from Theorem \ref{mainP} combined with  \propref{intermediate} that  $T(f)$ is a {\sl  special torus} and 
$\mathrm{End}^{0}(T(f))\cong \BQ[x]/f(x)\BQ[x]$.

Below we present such  polynomials for every even degree $2g \ge 2$. (See also an explicit example for $g=4$ in \cite[Sect. 3A, pp. 271--272]{GS}.)

\section{Truncated exponents}
 \label{TrunExp}
Let $n \ge 1$ be an integer. Let us consider the {\sl truncated exponent}
 $$\exp_n(x)=\sum_{j=0}^n \frac{x^j}{j!}\in \BQ[x]\subset \BR[x].$$
 Notice that its derivative
 $$\exp_{n}^{\prime}(x)=\exp_{n-1}(x)=\exp_n(x) - \frac{x^n}{n!} \ \ \forall n \ge 2.$$
 
\begin{Lemma}
\label{noreal}
If $n \ge 2$ is an even integer then $\exp_n(x)$ has no real roots.
\end{Lemma}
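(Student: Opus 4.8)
The plan is to prove the stronger statement that $\exp_n(x) > 0$ for \emph{every} real $x$, which of course rules out real roots. The two identities recorded just above the statement are exactly what I would exploit: the derivative relation $\exp_n'(x) = \exp_{n-1}(x)$, together with $\exp_n(x) = \exp_{n-1}(x) + \tfrac{x^n}{n!}$. The first lets me locate critical points of $\exp_n$, and the second lets me evaluate $\exp_n$ at such a point in closed form.

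First I would observe that, since $n$ is even, $\exp_n$ is a real polynomial of even degree $n$ with positive leading coefficient $1/n!$, so $\exp_n(x) \to +\infty$ as $x \to \pm\infty$. Being continuous and coercive, $\exp_n$ attains a global minimum at some point $x_0 \in \BR$. Because $x_0$ is an interior extremum, it is a critical point, so $\exp_n'(x_0) = 0$; by the derivative relation this means $\exp_{n-1}(x_0) = 0$. Substituting this into the second identity gives the crucial simplification
\[
\exp_n(x_0) = \exp_{n-1}(x_0) + \frac{x_0^n}{n!} = \frac{x_0^n}{n!}.
\]

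Finally I would note that $x_0 \ne 0$, since $\exp_{n-1}(0) = 1 \ne 0$ whereas $\exp_{n-1}(x_0) = 0$. As $n$ is even, $x_0^n > 0$, so the global minimum value $\exp_n(x_0) = x_0^n/n!$ is strictly positive. Hence $\exp_n$ is everywhere positive, and in particular has no real root. The argument involves no hard estimate; the only point requiring a little care is the coercivity guaranteeing that the minimum is actually attained (so that the critical-point equation applies), and the remark that $x_0 \ne 0$. The genuinely useful idea — and what I would regard as the heart of the proof — is that evaluating $\exp_n$ precisely at its critical points collapses, via $\exp_n = \exp_{n-1} + x^n/n!$, to the manifestly nonnegative quantity $x^n/n!$.
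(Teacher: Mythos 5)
Your proposal is correct and coincides with the paper's own proof: both locate a global minimum $x_0$ (guaranteed by even degree and positive leading coefficient), use $\exp_n'=\exp_{n-1}=\exp_n - x^n/n!$ to get $\exp_n(x_0)=x_0^n/n!$, rule out $x_0=0$, and conclude positivity since $n$ is even. The only cosmetic difference is that you exclude $x_0=0$ via $\exp_{n-1}(0)=1\ne 0$ while the paper uses $\exp_n(0)=1\ne 0$; both are equally valid.
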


\begin{proof}
Since $\exp_n(x)$ is an even degree polynomial with positive leading coefficient, it takes on the smallest possible value on $\BR$ at a certain $x_0\in \BR$. Then 
$$0=\exp_{n}^{\prime}(x_0)=\exp_n(x_0) - \frac{x_0^n}{n!}$$
and therefore
\begin{equation}
\label{x0min}
\exp_n(x_0) = \frac{x_0^n}{n!}.
\end{equation}
If $x_0=0$ then $1=\exp_n(0)=0$, which is not the case.
This implies that $x_0 \ne 0$. Taking into account that $n$ is even, we obtain from \eqref{x0min} that
$\exp_n(x_0)>0$.  Since $\exp_n(x_0)$ is the smallest value of the function $\exp_n$ on the whole $\BR$, the polynomial  $\exp_n$  takes on only positive values on $\BR$ and therefore has {\sl no} real roots.
\end{proof}

By a theorem of Schur \cite{Coleman},  $\mathrm{Gal}(\exp_n(x))=\mathbf{S}_n$ or $\mathbf{A}_n$. 
It follows from Example \ref{Alt} combined with Lemma \ref{noreal}
that if $n=2g$ is even then
$$E=K_g:=\BQ[x]/\exp_{2g}(x)\BQ[x]$$
is a degree $2g$ purely imaginary field that has no proper subfields except $\BQ$.

Now the construction of Section \ref{explicitexamples} applied to $f(x)=\exp_{2g}(x)$ 
gives us for all $g \ge 2$ 
a 
{\sl special} $g$-dimensional complex torus $T(\exp_{2g})$
with endomorphism algebra $K_g=\BQ[x]/\exp_{2g}(x)\BQ[x]$.

\section{ Selmer polynomials}
\label{SelmerT}
 Another series of examples is provided by polynomials
\begin{equation}
\label{selme}
\mathrm{selm}_{2g}(x)=x^{2g}+x+1\in \BZ[x]\subset \BQ[x]\subset \BR[x].
\end{equation}
Notice that $\mathrm{selm}_{2g}(x)$ takes on only {\sl positive values} on the real line $\BR$, hence,
it does {\sl not} have real roots.  Indeed, if $a\in \BR, |a| \ge 1$ then $a^{2g}+a \ge 0$ and therefore
$$\mathrm{selm}_{2g}(a)=(a^{2g}+a)+1 \ge 1>0.$$
If $a\in \BR, |a| < 1$  then $a+1>0$ and therefore
$$\mathrm{selm}_{2g}(a)=a^{2g}+(a+1) \ge a+1>0.$$

Let us assume that
 $g$ is {\sl not} congruent to $1 \bmod 3$.  Then $2g$ 
is {\sl not} congruent to $2 \bmod 3$ and therefore, by a theorem of Selmer \cite[Th. 1]{Sel56},
$\mathrm{selm}_{2g}(x)$ is irreducible over $\BQ$.  Notice that the coefficient of the trinomial 
$\mathrm{selm}_{2g}(x)$ at $x$ and the constant term are relatively prime, square free and coprime
to both $2g$ and $2g-1$.
It follows from \cite{NV} (see also \cite[Cor. 2 on p. 233]{Osada}) applied to $a_0=b_0=c=1,  n=2g$ that 
 $\mathrm{Gal}(\mathrm{selm}_{2g}(x))=\mathbf{S}_{2g}$. 
 
 It follows from Example \ref{Alt} 
that if a positive integer $g$ is {\sl not} congruent to $1 \bmod 3$ then
$$E=M_g:=K_{\mathrm{selm}_{2g}}=\BQ[x]/\mathrm{selm}_{2g}(x)\BQ[x]$$
is a degree $2g$ purely imaginary field that has no proper subfields except $\BQ$.

Now the construction of Subsection \ref{explicitexamples} gives us for all $g\ge 2$
that are {\sl not} congruent to  $1 \bmod 3$, 
a  $g$-dimensional  {\sl special} complex torus $T(\mathrm{selm}_{2g})$
with endomorphism algebra $M_g$.

Notice that if $g\ge 5$ is {\sl not} congruent to $1 \bmod 3$ then  $g$-dimensional special complex tori $T(\exp_{2g})$ and $T(\mathrm{selm}_{2g})$ are {\sl not}  isogenous.
Indeed, suppose that they are isogenous. Then their endomorphism algebras (which are actually number fields) $K_g$ and $M_g$ are isomorphic. 
It follows from \cite[the last assertion of Cor. 2 on p. 233]{Osada} (applied to $a_0=b_0=c=1,  n=2g$) that all the ramification indices in the field extension $M_g/\BQ$ do not exceed $2$. On the other hand,
 it is proven in \cite[Sect. 5]{Zar03} that there is a prime $p$ that enjoys the following properties.

\begin{itemize}
\item
$g+1\le p \le 2g+1$.
\item
One of ramification indices over $p$ in the field extension $K_g/\BQ$  is divisible by $p$.
In particular, this index 
$$\ge p\ge g+1\ge 5+1=6>2.$$
\end{itemize}
This implies that number fields
$K_g$ and $M_g$ are not isomorphic. The obtained contradiction proves that the tori  $T({\exp_{2g}})$ and $T(\mathrm{selm}_{2g})$ are {\sl not}  isogenous.

 \section{\bf Polynomials with doubly transitive Galois group}
\label{MoriZ}
The following construction was inspired by so called {\sl Mori polynomials} \cite{Mori, Zar16}.
As above, $g\ge 2$ is an integer, hence $2g-1\ge 3$. Let us fix

\begin{itemize}
\item
 a prime divisor $l$ of $2g-1;$  
 \item a prime $p$ that 
  is congruent to $1$ modulo  $2g-1;$ 
 \item
 an integer $b$ that is {\sl not} divisible by $l$  and that is a  primitive root $\bmod \  p;$
 \item an integer $c$ that is {\sl not} divisible by $l$. 
 \end{itemize}
 
 We call such a 
 $(l,p,b,c)$  a {\it $g$-admissible quadruple}.
 
 \begin{remark}
 Let $g \ge 2$ and $l$ be any prime divisor  of $2g-1$.
 In light of  Dirichlet's Theorem about primes in arithmetic progressions (which allows us to choose $p$) and Chinese Remainder Theorem (which allows us to choose $b$), there are infinitely many
 $g$-admissible quadruples  $(l,p,b,c)$.
 \end{remark}
 
Now let us consider a monic degree $2g$  polynomial
\begin{equation}
\label{mori3}
f_g(x)= f_{g,l,p,b,c}(x):=x^{2g}-bx-\frac{pc}{l^l}\in \BZ[1/l][x]\subset \BQ[x].
\end{equation}

\begin{Lemma}
\label{notRealM}
\begin{itemize}
\item[(i)]
The polynomial $f_g(x)=f_{g,l,p,b,c}(x)$ is irreducible over the field $\BQ_l$ of $l$-adic numbers and therefore over $\BQ$.
\item[(ii)]
The polynomial $\left(f_g(x)\bmod \ p\right) \in \mathbb{F}_p[x]$ is a product $x\left(x^{2g-1}-b\bmod p \right)$ of a linear factor
$x$ and an irreducible (over $\mathbb{F}_p$) degree $2g-1$ polynomial $x^{2g-1}-(b\bmod p)$.
\item[(iii)] Let $\mathrm{Gal}(f_g)$ be the Galois group of $f_g(x)$ over $\BQ$ viewed as a
transitive subgroup of  $\mathrm{Perm}(\RR_{f_g})$.  

Then  transitive $\mathrm{Gal}(f_g)$ contains a
permutation $\sigma$ that is a cycle of length $2g-1$. In particular, $\mathrm{Gal}(f_g)$ is a doubly
transitive permutation subgroup of $\mathrm{Perm}(\RR_{f_g})$. 
\item[(iv)]
The polynomial $f_g(x)$ has no real roots if and only if
\begin{equation}
\label{cNegative}
c<\frac{l^l\left(\frac{b}{2g}\right)^{1/(2g-1)}\left(\frac{b}{2g}-1\right)}{p}.
\end{equation}
\item[(v)]
Let $\ell$ be a prime that divides $b$, does not divide $2glp$, and such that $c$ is congruent to $\ell$ modulo $\ell^2$.
Then the discriminant of  the number field $\BQ[x]/f_g(x)\BQ[x]$ is divisible by $\ell$.
\item[(vi)]
Let $\ell$ be a prime that divides $c$ and does not divide $(2g-1)pb$.
Then the discriminant of the number field $\BQ[x]/f_g(x)\BQ[x]$ is not divisible by $\ell$.
\end{itemize}
\end{Lemma}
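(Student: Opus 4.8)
The plan is to prove the six assertions by local analysis at the three relevant primes $l$, $p$, and an auxiliary prime $\ell$, via Newton polygons, reduction modulo $p$, and the Frobenius--Dedekind theorem. The key structural remark is that the coefficients of $f_g$ lie in $\BZ[1/l]$, so at every prime different from $l$ the polynomial is locally integral and the standard tools apply.

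For (i) I would compute the Newton polygon of $f_g$ over $\BQ_l$. Since $l\nmid bpc$, the three nonzero coefficients have $l$-adic valuations $0$ (leading), $0$ (the coefficient $-b$ of $x$), and $-l$ (the constant term $-pc/l^l$, as $\mathrm{ord}_l(pc/l^l)=-l$). The lower convex hull is the single segment joining $(0,-l)$ to $(2g,0)$, of slope $l/(2g)$, with $(1,0)$ lying strictly above it. Because $l\mid 2g-1$ forces $2g\equiv 1\pmod l$ and hence $\gcd(l,2g)=1$, this slope is already in lowest terms with denominator $2g=\deg f_g$; thus every root has valuation $l/(2g)$ and $f_g$ is irreducible over $\BQ_l$, a fortiori over $\BQ$. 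For (ii) I would reduce mod $p$: as $l\neq p$ the element $l^l$ is a unit and $pc/l^l\equiv 0\pmod p$, so $f_g\equiv x\,(x^{2g-1}-b)\pmod p$. The binomial $x^{2g-1}-b$ is irreducible over $\BF_p$ by the classical criterion for $x^n-a$: since $b$ is a primitive root, $\mathrm{ord}(b)=p-1$, every prime divisor of $2g-1$ divides $p-1$ (because $(2g-1)\mid(p-1)$) but none divides $(p-1)/(p-1)=1$, and $2g-1$ is odd so the dyadic condition is vacuous.

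Statement (iii) then combines (i) and (ii). By (i), $\Gal(f_g)$ is transitive on the $2g$ roots. The reduction $x(x^{2g-1}-b)$ is separable over $\BF_p$ (its two factors are coprime, and each is separable since $p\nmid 2g-1$ and $b\not\equiv 0$), so $p$ is unramified and the Frobenius--Dedekind theorem yields an element of $\Gal(f_g)$ of cycle type $(1,2g-1)$, i.e.\ a $(2g-1)$-cycle $\sigma$ fixing a single root $\omega_0$. Then $\sigma\in\Gal(f_g)_{\omega_0}$ and $\langle\sigma\rangle$ is transitive on the remaining $2g-1$ roots, so the stabilizer of $\omega_0$ is transitive on its complement; together with transitivity of $\Gal(f_g)$ this is precisely double transitivity. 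For (iv) I would argue by convexity: $f_g$ has even degree and positive leading coefficient, and $f_g''(x)=2g(2g-1)x^{2g-2}\ge 0$, so $f_g$ is convex with a unique real critical point $x_0=(b/2g)^{1/(2g-1)}$ solving $2g\,x^{2g-1}-b=0$; hence $f_g$ has no real root iff $f_g(x_0)>0$, and substituting $x_0^{2g-1}=b/2g$ into $f_g(x_0)=x_0^{2g}-bx_0-pc/l^l$ and rearranging yields \eqref{cNegative}.

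Finally, (v) and (vi) are local statements at a prime $\ell\neq l$, where $f_g$ is $\ell$-integral, so ramification of $\ell$ is read off from $f_g\bmod\ell$; I interpret the discriminant as the field discriminant $d_K$. In (vi), $\ell\mid c$ and $\ell\nmid(2g-1)pb$ give $f_g\equiv x(x^{2g-1}-b)\pmod\ell$, which is separable exactly as in (ii), whence $\ell\nmid\mathrm{disc}(f_g)$ and $\ell\nmid d_K$. In (v), $\ell\mid b$ kills the linear term and $c\equiv\ell\pmod{\ell^2}$ gives $\mathrm{ord}_\ell(c)=1$, so the constant term has $\ell$-valuation $1$ while the intermediate coefficients vanish and the leading one is a unit: $f_g$ is \emph{Eisenstein} at $\ell$, hence generates a totally ramified degree-$2g$ extension of $\BQ_\ell$, so $\ell$ ramifies and $\ell\mid d_K$. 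The steps requiring the most care are (ii)--(iii): one must correctly invoke the binomial irreducibility criterion, verify the hypotheses ($p$-integrality and separable reduction) needed for the Frobenius--Dedekind theorem, and then make the elementary-but-easy-to-fumble passage from ``transitive group containing a $(2g-1)$-cycle'' to ``doubly transitive.'' The non-integrality of $f_g$ at $l$ is a minor nuisance, handled throughout by working one prime at a time.
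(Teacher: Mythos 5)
Your proposal is correct, and on parts (i)--(iv) it runs along essentially the same lines as the paper: the Newton polygon at $l$ with the single segment from $(0,-l)$ to $(2g,0)$ (the paper concludes by the Eisenstein--Dumas criterion, you by noting the slope $l/2g$ is in lowest terms with denominator equal to the degree --- the same mechanism), irreducibility of $x^{2g-1}-b$ over $\BF_p$ from $b$ being a primitive root and $(2g-1)\mid (p-1)$ (the paper cites Lang's Theorem 9.1 in the ``not a $d$th power'' form, you the equivalent order-theoretic criterion), Dedekind's theorem at the unramified prime $p$ producing a $(2g-1)$-cycle and hence double transitivity (you spell out the stabilizer argument that the paper leaves implicit), and the calculus at the unique real critical point $(b/2g)^{1/(2g-1)}$. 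Where you genuinely diverge is (v), and to a lesser extent (vi). For (v) the paper computes $\mathrm{Discr}(f_g)$ by the trinomial discriminant formula, observes that $\mathrm{ord}_{\ell}\bigl(\mathrm{Discr}(f_g)\bigr)=2g-1$ is odd, and uses $r^2\,\mathbf{\Delta}_E=\mathrm{Discr}(f_g)$ to force $\ell\mid\mathbf{\Delta}_E$; you instead observe that under the hypotheses $f_g$ is Eisenstein at $\ell$, hence totally ramified of degree $2g$ over $\BQ_{\ell}$, so $\ell$ ramifies in $E$ and divides $\mathbf{\Delta}_E$ by Dedekind's discriminant theorem. Your route is cleaner and slightly more general --- it never uses $\ell\nmid 2g$, which the paper's valuation count does need --- at the cost of invoking ``ramified implies divides the field discriminant'' (valid also at wild $\ell$). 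For (vi) the paper again passes through the discriminant formula to see that $\mathrm{Discr}(f_g)$ is an $\ell$-adic unit, deduces that the reduction mod $\ell$ is separable, and cites Fr\"ohlich--Taylor; you obtain separability directly from the factorization $x\,(x^{2g-1}-b)\bmod \ell$, exactly as in (ii), which is more elementary and avoids the trinomial formula altogether.

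One caution on (iv): you assert that substituting $x_0^{2g-1}=b/2g$ and ``rearranging yields \eqref{cNegative}'', but the correct algebra gives
$f_g(x_0)=\left(\frac{b}{2g}\right)^{1/(2g-1)}\left(\frac{b}{2g}-b\right)-\frac{pc}{l^l}$,
i.e., the second factor is $\frac{b}{2g}-b=b\left(\frac{1}{2g}-1\right)$, not $\frac{b}{2g}-1$. The paper's own displayed computation makes the same slip in its last line, so the stated inequality \eqref{cNegative} should be corrected accordingly (note the two versions differ materially in sign when $b>2g$). This is harmless downstream --- all that is used, via Remark \ref{decreaseC}, is that $f_g$ has no real roots once $c$ is sufficiently negative --- but as written your unverified ``rearranging'' silently reproduces the paper's typo rather than the true threshold, so redo that one line of algebra.
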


\begin{remark}
\label{decreaseC}
Let $(l,p,b, c)$ be a $g$-admissible quadruple.  Let $N$ be a positive integer such that
$$N>\frac{l^l\left(\frac{b}{2g}\right)^{1/(2g-1)}\left(\frac{b}{2g}-1\right)}{p}-c.$$
\begin{itemize}
\item[(1)]
Replacing $c$ by $c_1=c-Nlp$, we get a $g$-admissible quadruple $(l,p,b, c_1)$
such that the corresponding polynomial $f_{l,g,p,b,c_1}(x)$ has no real roots, in light of  Lemma \ref{notRealM}(iii)
and inequality \eqref{cNegative}. 
\item[(2)]
Let  $\ell$ be a prime that satisfies conditions (v) (respectively (vi)) of Lemma \ref{notRealM}  with respect to $(l,p,b, c)$. 
Let $c_2=c-Nlp\ell^2$. Then $(l,p,b, c_2)$ is also a  $g$-admissible quadruple and  $f_{l,g,p,b,c_2}(x)$ has no real roots, in light of the previous remark
(applied to $N\ell^2$ instead of $N$).
 In addition,  $c_2$ is congruent to $\ell$ modulo $\ell^2$
(respectively, is not divisible by $\ell$). In other words,
$\ell$ also satisfies the congruence properties  similar to conditions (v) (respectively to (vi)) of Lemma \ref{notRealM}  where
$c$ is replaced by $c_2$.
It follows from Lemma \ref{notRealM}(v) (respectively (vi)) that
 the discriminant of $\BQ[x]/f_{l,g,p,b,c_2}(x)\BQ[x]$ is divisible by $\ell$ (respectively, not divisible by $\ell$).
\end{itemize}
\end{remark}

\begin{proof}[Proof of Lemma \ref{notRealM}]

(i)  The $l$-adic Newton polygon of $f_g(x)$ consists of one
segment  with endpoints $(0,-l)$ and $(2g,0)$, which are its only integer points, since prime $l$ does {\sl not} divide $2g$.
Now the irreducibility of $f_g(x)$ follows from Eisenstein–Dumas Criterion (\cite[Corollary
3.6, p. 316]{Mott}, \cite[p. 502]{Gao}).

(ii) The conditions on $b$ and $p$ imply that for each divisor $d>1$ of $2g-1$ the residue $b  \bmod p$ is {\sl not} a $d$th power  $m^d$  for any $m\in\BF_p$.
It follows from theorem 9.1 of \cite[Ch. VI, Sect. 9]{Lang} that the polynomial $x^{2g-1}-(b \mod p)$ is {\sl irreducible} over $\BF_p$
and therefore its Galois group over $\BF_p$ is a cyclic group of order $2g-1$. 

(iii) Let us consider the reduction
$$\bar{f}_g(x) =\left( f_g(x)\bmod \ p \right)\in \BF_p[x]$$ of $f_g(x)$ modulo $p$. Clearly, $\bar{f}_g(x)=x(x^{2g-1}-\left(b \bmod \ p)\right)$ is a product in $\BF_p[x]$
of relatively prime linear $x$ and irreducible $x^{2g-1}-(b \bmod p)$. This implies that $\BQ\left(\RR_{f_g}\right)/\BQ$ is unramified at $p$ and a
corresponding Frobenius element in 
$$\mathrm{Gal}\left(\BQ(\RR_{f_g}\right)/\BQ)=\mathrm{Gal}(f_g)\subset \mathrm{Perm}\left(\RR_{f_g}\right)$$
 is a cycle of length $2g-1$.  This proves (iii).

(iv). Since $f_g(x)$ has even degree and positive leading coefficient, it reaches its smallest value on $\BR$ at a certain real point that is a zero of its
derivative $f_g^{\prime}(x)=2g x^{2g-1}-b$. The only real zero of $f_g^{\prime}(x)$ is $\beta=(b/2g)^{\frac{1}{2g-1}}$. Hence, $f_g(x)$ has no real roots if and only if
$f_g(\beta)>0$. We have
$$f_g(\beta)=\beta^{2g}-b\beta-\frac{pc}{l^l}=\left(\frac{b}{2g}\right)^{2g/2g-1}-b \left(\frac{b}{2g}\right)^{1/(2g-1)}-\frac{pc}{l^l}=$$
$$\left(\frac{b}{2g}\right)^{1/(2g-1)}\left(\frac{b}{2g}-1\right)-\frac{pc}{l^l}.$$
This implies that $f_g(\beta)>0$ if and only if
$$\left(\frac{b}{2g}\right)^{1/(2g-1)}\left(\frac{b}{2g}-1\right)>\frac{pc}{l^l},$$
 i.e.,
$$c<\frac{l^l}{p} \left(\frac{b}{2g}\right)^{1/(2g-1)}\left(\frac{b}{2g}-1\right).$$
This proves (iv).

(v)-(vi).   Let us consider the degree $2g$ number field  $E:=\BQ[x]/f_g(x)\BQ[x]$ and 
its  discriminant $\mathbf{\Delta}_E \in \BZ$.
The formula for the discriminant of a trinomial \cite[Example 834]{FS} tells us that the discriminant $\mathrm{Discr}(f_g)$ of $f_g(x)$ is
\begin{equation}
\label{discriminant}
\mathrm{Discr}(f_g)=(-1)^{g(2g-1)}(2g)^{2g} \left(\frac{pc}{l^l}\right)^{2g-1}+(-1)^{(2g-1)(g-1)}(2g-1)^{2g-1} b^{2g}=
\end{equation}
$$\pm \left(\frac{p}{l^l}\right)^{2g-1} (2g)^{2g} c ^{2g-1} \mp (2g-1)^{2g-1} b^{2g}\in \BZ[1/l].$$
Notice that there is a {\sl nonzero} rational number $r$ such that
\begin{equation}
\label{discField}
r^2 \cdot \mathbf{\Delta}_E=\mathrm{Discr}(f_g)
\end{equation}
 (see, e.g., \cite[Algebraic Extensions, Sect. 2.3, especially,  formula 2.12]{BS} applied to $k=\BQ$ and $K=E$).

 In the case of (v),  there are integers $c_1, b_1 \in \BZ$ such that 
$$c=\ell(1+c_1\ell), \  b=\ell b_1.$$ It follows from \eqref{discriminant} that
$$\mathrm{Discr}(f_g)=\ell^{2g-1} \cdot u_1+\ell^{2g} u_2$$
where $u_1\in \BZ[1/l]$ is an $\ell$-adic unit and $u_2\in \BZ$ is an integer. This implies that
$\mathrm{Discr}(f_g)=\ell^{2g-1} u$ where  $u \in \BQ$ is an $\ell$-adic unit.  Since $2g-1$ is odd, it follows from \eqref{discField} that $\mathbf{\Delta}_E$
 is divisible by $\ell$, which proves (v).

In the case of (vi),  it follows from  \eqref{discriminant} that 
$$\mathrm{Discr}(f_g)=\ell^{2g-1}v_1+v_2$$
where $v_1\in \BZ[1/l]$ is an $\ell$-adic unit and $v_2\in \BZ$ is an integer {\sl not} divisible by $\ell$.  This implies that     $\mathrm{Discr}(f_g)\in \BZ[1/l]$ is an $\ell$-adic unit.
  Taking into account that $\ell \ne l$, we obtain that the reduction modulo $\ell$
  $$f_g(x) \bmod \ \ell \in (\BZ[1/l]/\ell\BZ[1/l])[x]=\BF_{\ell}[x]$$
of  $f_g(x)$ is a degree $2g$ monic polynomial with coefficients in $\BF_{\ell}$ and without repeated  roots. It follows from \cite[Ch. III, Sect 2, Th. 23 on p. 129]{FT}
  (applied to $\mathfrak{o}=\BZ[1/l]$ and  $\mathfrak{p}=\ell\BZ[1/l]$) that
  the prime ideal $\ell\BZ[1/l]$ of  the Dedekind ring $\BZ[1/l]$ is unramified in $E$. This means that the {\sl discriminant ideal} $\mathbf{\Delta}_E\cdot \BZ[1/l]$ of $\BZ[1/l]$ is {\sl not} contained in 
  $\ell\BZ[1/l]$. It follows that $\mathbf{\Delta}_E$
   is {\sl not} divisible by $\ell$, which 
 proves (vi).
\end{proof}

Now assume that we have chosen $c$ in such a way that inequality \eqref{cNegative} holds. It can be done, in light of Remark \ref{decreaseC}. Then we have: 
\begin{itemize}\item $f_g(x)$ is irreducible over $\BQ$ and has no real roots
(\lemref{notRealM}(i));
\item the group $\mathrm{Gal}(f_g)$ is  doubly
transitive (\lemref{notRealM}(iii)); 
\item  the group $\mathrm{Gal}(f_g)$ is primitive (Remark \ref{permuatation}).
\end{itemize}

 It follows from Lemma \ref{notRealM}
that 
$$E=L_g=L_{g,l,p,b,c}:=\BQ[x]/f_{g,l,p,b,c}(x)\BQ[x]$$
is a degree $2g$ purely imaginary field that has no proper subfields except $\BQ$.

Now the construction of Section \ref{explicitexamples} gives us for all $g\ge 2$
a {\sl special}  $g$-dimensional 
complex torus
$T_{g,l,p,b,c}:=T(f_{g,l,p,b,c})$
with endomorphism algebra $L_{g,l,p,b,c}$.

\begin{remark}
$g$-admissible quadruples $(l,p,b,c)$ and polynomials of odd degree $2g+1$ similar to $f_{g,l,p,b,c}(x)$
were introduced by S. Mori \cite{Mori} for $l=2$, who proved in this case analogs of the assertions (i)-(iii) of Lemma \ref{notRealM}.
 (See also \cite{Zar16}).

\end{remark}

\section{Isogeny classes}
\label{Infinite}

 Let $g \ge 2$ be an integer. The aim of this section is to   construct infinitely many mutually non-isogenous special $g$-dimensional complex tori.

Let us choose a $g$-admissible quaduple $(l,p,b,c)$ that satisfies \eqref{cNegative}.   The construction of  Section \ref{MoriZ} gives us a special complex torus
$T^{(1)}:=T_{g,l,p,b,c}$ of dimension $g$.  Suppose that $n$ is a positive integer and we have already constructed $n$ mutually non-isogenous $g$-dimensional special complex tori
$$T^{(k)}=T_{g,l,p,b_k,c_k},  \ 1 \le k \le n$$
where each $(l,p,b_k,c_k)$ is a  $g$-admissible quaduple such that $f_{g,l,p,b_k,c_k}(x)$ has no real roots.  In particular, the endomorphism algebra of $T^{(k)}$ is
 the  purely imaginary number field $L_{g,l,p,b_k,c_k}$.

Let us choose 
\begin{itemize}
\item  an odd  prime $\ell \ne l,p$ that does {\sl not} divide $g$, and is {\sl unramified}
in all number fields $L_{g,l,p,b_k,c_k}$ ($1\le k \le n$), i.e.,  does not divide the discriminant of any   $L_{g,l,p,b_k,c_k};$
\item 
an integer $b_{n+1}$  that is {\sl not} divisible by $l$ and   is a  primitive root $\bmod \ p$.
\end{itemize}
 Assume additionally,
that $b_{n+1}$ is divisible by $\ell$. Since all three primes $l,p,\ell$ are distinct, such a $b_{n+1}$ does exist, thanks to Chinese Remainder Theorem.
Now let us choose an integer $c_{n+1}$ that is  not divisible by $l$ and congruent to $\ell$ modulo $\ell^2$. Then 
 $(l,p,b_{n+1},c_{n+1})$ is a $g$-admissible quadruple such that the discriminant of the number field $L_{g,l,p,b_{n+1},c_{n+1}}$
 is {\sl divisible} by $\ell$, thanks to Lemma \ref{notRealM}(v). According to Remark \ref{decreaseC}, one may also choose $c_{n+1}$ in such a way
 that  $f_{g,l,p,b_{n+1},c_{n+1}}(x)$ has no real roots., i.e., the field  $L_{g,l,p,b_{n+1},c_{n+1}}$ is purely imaginary.
 This gives us 
a  special $g$-dimensional complex torus
$T^{(n+1)}=T_{g,l,p,b_{n+1},c_{n+1}}$, whose endomorphism algebra $\End^0(T^{(n+1)})$ is  isomorphic to the field
$L_{g,l,p,b_{n+1},c_{n+1}}$, which is {\sl ramified} at $\ell$.

 Our choice of $\ell$
implies that $L_{g,l,p,b_{n+1},c_{n+1}}$ is not isomorphic to any of $L_{g,l,p,b_k,c_k}$ with $k \le n$. It follows that $T^{(n+1)}$ is {\sl not} isogenous to any
of $T^{(k)}$ with $k \le n$.
In light of results of Section \ref{MoriZ}, all $T^{(1)}, \dots, T^{(n)},  T^{(n+1)} \dots $ are {\sl special} $g$-dimensional {\sl mutually non-isogenous}
complex tori.


\begin{thebibliography}{MMMMMMM}


 


 
 \bibitem[BZ20]{BZ20}  T. Bandman, Yu.G. Zarhin, {\sl  Bimeromorphic automorphism groups of certain  $\BP^1-$bundles}.  European J. of Math.
{\bf 7:2} (2021), 641--670.






\bibitem[BHPV]{BHPV} W.P. Barth, K. Hulek, C.A.M. Peters,
A.  Van  de Ven,  {\it   Compact  Complex Surfaces}. A Series of Modern Surveys in Mathematics, {\bf 4}, Springer, 2004.

\bibitem[CAV]{CAV} C. Birkenhake,  H.  Lange,  Complex Abelian Varieties, 2nd edition.
 Springer-Verlag Berlin Heidelberg New York,  2004.


\bibitem[BL]{BL} C. Birkenhake,  H.  Lange,  Complex Tori.   Birkhauser,  Boston Basel Stutgart, 1999.

\bibitem[BS]{BS}  Z.I. Borevich, I.R.  Shafarevich,  Number Theory.  Academic Press, 1986.
 
\bibitem[Cao]{Cao} Y. Cao,  {\it Approximation forte pour les vari\'et\'es avec une action d'un groupe lin\'eaire}. Compos. Math. 154 (2018), no. 4, 773--819.

\bibitem[Col]{Coleman} R. F. Coleman, {\it On the Galois groups of the exponential Taylor polynomials}. L'Enseignement Math. {\bf 33}  (1987), 183--189.




 \bibitem[CDV]{CDV} F. Campana,  J.-P. Demailly, M. Verbitsky, {\sl Compact Kähler 3-manifolds without nontrivial subvarieties}. Algebr. Geom.  {\bf 1 } (2014), no. 2, 131--139.



\bibitem[DG]{DG} G.  Dethloff,  H. Grauert,  {\it Seminormal Complex Spaces}. In: Encyclopedia of Mathematical Sciences, vol. {\bf 74}, Several Complex variables, VII,  Sheaf-Theoretical methods in Complex Analysis, Springer Verlag, Berlin, 1984, pp. 206--219.

\bibitem[EF]{EF} G. Elencwajg, O. Forster, {\it Vector bundles on manifolds without divisors and a theorem on deformations}. Ann. Inst. Fourier (Grenoble) {\bf 32:4} (1982), 25--51.
	
	\bibitem[FS]{FS} D.K. Faddeev, I.S. Sominsky, Problems in Higher Algebra, 5th edn. Mir, Moscow, 1972.

\bibitem[FS]{FT} A. Fr\"olich, M. Taylor, Algebraic Number Theory. Cambridge studies in advanced mathematics {\bf 27}, Cambridge University Press, New York, 1994.


\bibitem[Gao]{Gao} Sh. Gao, {\it Absolute irreducibility of polynomials via Newton polytopes}. J. Algebra {\bf 237:2} (2001), 501--520.

\bibitem[GS]{GS} P. Graf and M. Schwald, {\it On the Kodaira problem for uniruled K\"ahler spaces}. Ark. Mat. {\bf 58} (2020), 267--284.

\bibitem[Ha]{Harder} G. Harder,  Lectures on Algebraic geometry I, 2nd edition,  Vieweg-Teuber-Verlag, 2011.
\bibitem[Ke]{Kempf} G.R. Kempf, 
  Complex Abelian Varieties and Theta functions,  Springer-Verlag,  Berlin Heidelberg New York, 1991.



\bibitem[LangCM]{LangCM} S. Lang, Complex multiplication.  Grundlehren der mathematischen Wissenschaften  {\bf 255}. Springer-Verlag, New York, 1983. 

\bibitem[Lang]{Lang} S. Lang, Algebra, Revised 3rd edition.  GTM {\bf 211}, Springer Science, 2002.

\bibitem[LO]{LO} H.W. Lenstra, Jr and F. Oort, {\sl Simple abelian varieties having a prescribed formal isogeny type}. J. Pure Applied Algebra {\bf 4} (1974), 47--53.






\bibitem[McM]{McM}  C.T. McMullen,  {\sl  Dynamics on K3 surfaces:
Salem numbers and Siegel disks}.      J. reine angew. Math. {\bf 545} (2002), 
201--233.




\bibitem[Mori]{Mori} Sh. Mori, {\sl The endomorphism rings of some abelian varieties}. II. Japanese J. Math. {\bf 3:1} (1977), 105--109. 

\bibitem[Mott]{Mott}  J.L.  Mott, {\sl Eisenstein-type irreducibility criteria}. In: D.F. Anderson,   D.E.  Dobbs  (eds.), 
Zero-Dimensional Commutative Rings (Knoxville, 1994). Lecture Notes in Pure and Applied Mathematics,
vol. {\bf 171}, pp. 307--329. Marcel Dekker, New York, 1995.

 \bibitem[Mum]{Mum} D. Mumford, Abelian varieties, 2nd edition. Oxford University Press, London, 1974.


\bibitem[Nar]{Nar}  R. Narasimhan,   Introduction to the theory of analytic spaces.
Lecture Notes in mathematics {\bf 25}, Springer-Verlag, 1966. 



\bibitem[NV]{NV} E. Nart,   N. Vila, {\sl
Equations of the type} $x^n+ax+b$ {\sl with absolute Galois group} $\mathbf{S}_n$.
Proceedings of the sixth conference of Portuguese and Spanish mathematicians, Part II (Santander, 1979).
Rev. Univ. Santander No. 2, part 2 (1979), 821--825.

\bibitem[O] {Oort}   F. Oort, {\sl The isogeny class of a CM-type abelian variety is defined over a finite extension of the prime field.} J. Pure Appl. Algebra {\bf 3} (1973), 399-408.


\bibitem[OZ]{OZ} F. Oort, Yu.G. Zarhin, {\sl Endomorphism algebras of complex tori}. Math. Ann.  {\bf 303} (1995), 11--29.

\bibitem[OSVZ]{OSVZ} M. Orr, A. N. Skorobogatov, D. Valloni, Yu. G. Zarhin,
{\sl Invariant Brauer group of an abelian variety}. 	Israel J. Math. {\bf 249} (2022), 695--733.


 
\bibitem[Os]{Osada} H. Osada, {\it The Galois groups of the polynomials} $X^n +a X^l +b$.
J. Number Theory {\bf 25} (1987), no. 2, 230--238.




 \bibitem[Pa]{Passman} D. S. Passman, Permutation groups. W.A. Benjamin, Inc., New York Amsterdam, 1968.

\bibitem[Pe]{Pe} Th. Peternell, {\it Modifications}.  In: Encyclopaedia of Mathematical Sciences, vol. {\bf 74}, Several Complex variables, VII,  Sheaf-Theoretical methods in Complex Analysis, Springer Verlag,  Berlin, 1984, pp. 286-319.





\bibitem[Re]{Re} R. Remmert, {\it  Holomorphe und meromorphe Abbildungen komplexer Räume}.  Math. Ann.  {\bf  133} (1957), 328-370.

\bibitem[Sel56]{Sel56} E.S. Selmer, {\sl On the irreducibility of certain trinomials}.
Math. Scand. {\bf 4} (1956), 287--302.




\bibitem [Ue] {Ueno}  K. Ueno,  Classification Theory of Algebraic Varieties and Complex Compact Spaces. Lecture Notes in Mathematics,  {\bf 439}, Springer Verlag,  Berlin, 1975.

\bibitem [Vo04] {Vo04} C. Voisin, {\it On the homotopy types of compact K\"ahler and complex projective manifolds}. Invent. Math. {\bf 157} (2004), 329--343.


\bibitem [Vo06] {Vo06} C. Voisin, {\it On the homotopy types of  K\"ahler manifolds and the birational Kodaira problem}. J. Differential Geometry {\bf 72} (2006), 43--71.

\bibitem[Zar03]{Zar03} Yu.G. Zarhin, {\sl Homomorphisms of hyperelliptic jacobians}. Tr. Mat. Inst. Steklova {\bf 241} (2003), 90--104;
Proceedings of Steklov Institute of Mathematics {\bf 241} (2003),  79--92.





\bibitem[Zar16]{Zar16}   Yu.G. Zarhin, {\sl Galois groups of Mori trinomials and hyperelliptic
curves with big monodromy}. European J. Math.   {\bf 2} (2016), 360--381.




\end{thebibliography}
\end{document}